\theoremstyle{plain}
\newtheorem{thm}{\it Theorem}[section]
\newtheorem{prop}[thm]{\it Proposition}
\theoremstyle{remark}
\newtheorem{defn}[thm]{Definition}
\newtheorem{rem}[thm]{Remark}
\newtheorem{ex}[thm]{Example}
\numberwithin{equation}{section}
\begin{document}

\title  [Matrix-Valued Gabor Frames over LCA Groups for Operators]{Matrix-Valued Gabor Frames over LCA Groups for Operators}

\author[Jyoti]{Jyoti}

\address{Department of Mathematics,
University of Delhi, Delhi-110007.}
\email{jyoti.sheoran3@gmail.com}

\author[Lalit Kumar  Vashisht]{Lalit  Kumar Vashisht$^*$}
\address{
Department of Mathematics,
University of Delhi, Delhi-110007.}
\email{lalitkvashisht@gmail.com}
\author[Uttam Kumar Sinha]{Uttam Kumar Sinha}
\address{
Department of Mathematics,  Shivaji College,
University of Delhi, Delhi-110027, India.}
\email{uksinha@shivaji.du.ac.in}

\begin{abstract}
G\v avruta studied atomic systems   in terms of frames for range of operators (that is, for subspaces),  namely $K$-frames, where the lower frame condition is controlled by the Hilbert-adjoint of a bounded linear operator $K$. For  a  locally compact abelian group G and a positive integer $n$, we study frames of matrix-valued Gabor systems in the matrix-valued Lebesgue space $L^2(G, \mathbb{C}^{n\times n})$ , where a bounded linear operator $\Theta$ on $L^2(G, \mathbb{C}^{n\times n})$ controls not only lower but also the upper frame condition. We term such frames matrix-valued $(\Theta, \Theta^*)$-Gabor frames. Firstly, we discuss frame preserving mapping in terms of hyponormal operators. Secondly, we give necessary and sufficient conditions for the existence of matrix-valued $(\Theta, \Theta^*)$- Gabor frames in terms of hyponormal operators. It is shown that if $\Theta$ is adjointable hyponormal operator, then $L^2(G, \mathbb{C}^{n\times n})$ admits a $\lambda$-tight $(\Theta, \Theta^*)$-Gabor frame for every positive real number $\lambda$. A characterization of matrix-valued $(\Theta, \Theta^*)$-Gabor frames is given. Finally, we show that matrix-valued $(\Theta, \Theta^*)$-Gabor frames are stable under small perturbation of window functions. Several examples are given to support our study.
\end{abstract}

\renewcommand{\thefootnote}{}
\footnote{2020 \emph{Mathematics Subject Classification}: 42C15; 42C30; 42C40.}

\footnote{\emph{Key words and phrases}: Frames, $K$-frames, Gabor frames, hyponormal operator, locally compact abelian group.\\
The research of  Lalit Kumar Vashisht is  supported by the Faculty Research Programme Grant-IoE, University of Delhi \ (Grant No.: Ref. No./IoE/2021/12/FRP).\\
$^*$Corresponding author: Lalit Kumar Vashisht.}

\maketitle

\baselineskip15pt
 \section{Introduction}
In a fundamental published paper  \cite{G},  Gabor introduced a fundamental approach to signal decomposition in terms of elementary
signals.  Duffin and Schaeffer \cite{DS} in
1952, while addressing some deep problems in non-harmonic Fourier
series, abstracted Gabor's method to define frames for Hilbert
spaces. To be exact, they introduced frames of exponentials for the space $L^2(-\delta, \delta)$ under the name Fourier frame. Let $\mathcal{H}$ be a  complex separable   Hilbert space with an inner product $\langle ., . \rangle$. A countable collection of vector $\Phi:=\{\varphi_k\}_{k=1}^{\infty}$ in a separable Hilbert space $\mathcal{H}$   is called a \emph{frame} (or \emph{Hilbert frame})  for  $\mathcal{H}$  if there exist finite positive scalars $A_o$ $B_o$  such that
\begin{align}\label{fin1.1}
A_o \|\varphi\|^2\leq  \sum_{k = 1}^{\infty} |\langle \varphi, \varphi_k\rangle|^2 \leq B_o \|\varphi\|^2
\end{align}
for all $\varphi \in \mathcal{H}$.
The scalars $A_o$ and $B_o$ are called the \emph{lower} and \emph{upper frame bounds} of $\Phi$, respectively. Ineq. \eqref{fin1.1} is called the  \emph{frame inequality} of $\Phi$. The frame inequality guarantee invertibility of the frame operator  $\mathcal{S} : \mathcal{H}\rightarrow \mathcal{H}$ given by $\mathcal{S}\varphi= \sum\limits_{k = 1}^{\infty} \langle \varphi, \varphi_k\rangle \varphi_k$. This gives the stable reconstruction of each $\varphi$ in $\mathcal{H}:$
$\varphi = \mathcal{S}\mathcal{S}^{-1}\varphi =\sum\limits_{k = 1}^{\infty} \langle \mathcal{S}^{-1}\varphi, \varphi_k \rangle \varphi_k = \sum\limits_{k = 1}^{\infty} \langle \varphi, \mathcal{S}^{-1}\varphi_k \rangle \varphi_k.$ This decomposition is useful in signal processing \cite{Kova07, Vst}, in particular, in lost of coefficients, see \cite{OC2, H11} for technical details. Nowadays, frames are used in sampling \cite{Asamp}, iterated function system \cite{Vlk}, distributed signal processing \cite{DHou}, operator theory \cite{Ald2, DVV17, JVashII, JSchmidt}, application of wavelets \cite{H89}.   We refer to texts \cite{OC2, Groch, H11, Y} for basic theory of frames.

G$\check{a}$vruta in \cite{LG} introduced the notion of $K$-frames, where $K$ is a linear bounded  operator acting on the underlying Hilbert  space $\mathcal{H}$.
\begin{defn}\cite[ p.  142]{LG}
Let $K \in \mathcal{B}(\mathcal{H})$.  A sequence $\Phi:=\{\varphi_k\}_{k\in\mathbb{I}} \subset
\mathcal{H}$ is called a \emph{$K$-frame} for $\mathcal{H}$ if there exist
constants $0 < a_o, \  b_o < \infty$ such that
\begin{align}
a_o\|K^*\varphi\|^2\leq \sum_{k =1}^{\infty} |\langle \varphi_k, \varphi \rangle|^2 \leq
b_o\|\varphi\|^2 \ \text{for all} \ \varphi \in \mathcal{H}.
\end{align}
\end{defn}
The numbers $a_o$ and $b_o$ are collectively known as \emph{$K$-frame bounds}. If $K=I$,  the identity operator on $\mathcal{H}$, then $K$-frames are the ordinary Hilbert frames. However, a $K$-frame need not be a frame when $K\neq I$. To be exact, $K$-frames are generalization of frames, which allow the reconstruction of elements from the range Ran$(K)$ of $K$. Note that a $K$-frame  for $\mathcal{H}$ is a Bessel sequence, so its frame operator  is well defined. But,  in general, it is  not invertible on $\mathcal{H}$. However, the frame operator of a $K$-frame  is invertible on the subspace Ran$(K)$ of $\mathcal{H}$, whenever the Ran$(K)$  is closed. In  \cite{LG},  G$\check{a}$vruta  characterized  $K$-frames in separable Hilbert spaces by using bounded linear operators on the underlying space. $K$-frames are also related to atomic systems and G$\check{a}$vruta in \cite{LG} characterized atomic systems in terms of  $K$-frames in separable  Hilbert spaces. She also observed  many differences between $K$-frames and ordinary  frames in separable Hilbert spaces. More precisely, $K$-frames gives stable analysis and reconstruction of functions from a subspace, e.g., range of operators.  Xiao, Zhu, and  G$\check{a}$vruta  \cite{XZG}  gave various methods to construct $K$-frames in separable Hilbert spaces. They also discussed stability  of $K$-frames under small perturbation. Recently, $K$-frames in distributed signal processing are studied in \cite{DV17ar, JVash, XChen}.

Frames  in matrix-valued signal spaces  have potential applications in signal processing  as most of the application  areas involve  matrix-valued  signals.  Xia and  Suter in \cite{XS} studied vector-valued wavelets which play important role in multivariate signals.  It is worth observing that frame properties, in general, not  carried from  a signal space  to  its associated matrix-valued signal space.   In this direction, the authors of \cite{JyVa20} studied an interplay between frames and matrix-valued frames, where they considered the wave packet structure in the euclidean matrix-valued space $L^2(\mathbb{R}^d, \mathbb{C}^{s\times r})$. They also gave some classes of matrix-valued window functions which can generate frames. Frame properties of WH-packets which is generalized Aldroubi's model \cite{A} for construction of new frames from a given frame studied in \cite{JVIs} and sufficient conditions for finite sums of matrix-valued wave packet frames can be found in \cite{JDV2}. Two authors in \cite{JVash} introduced and studied matrix-valued frames for range of operators. Recently, matrix-valued Gabor frames over locally compact abelian groups studied by authors of \cite{Jind}. Motivated by applications of matrix-valued frames and differences between ordinary frames and $K$-frames, we study matrix-valued Gabor frames over locally compact abelian (LCA) groups, where both the lower frame condition and upper frame condition are controlled by bounded linear operators, in particular hyponormal operators, on the matrix-valued signal space over LCA groups. Notable contribution in this work include frame preserving mapping in terms of hyponormal operators, existence of tight matrix-valued Gabor frames over LCA groups for hyponormal operators. A characterization of matrix-valued Gabor frames over LCA groups and new stability results for  matrix-valued Gabor frames over LCA groups under small perturbation.

This paper is organized as follows. In Section \ref{SecII}, we set the basic notions and definitions
on the  matrix-valued signal space and matrix-valued Gabor frames over locally compact abelian (LCA) groups and frames for operators to the make the paper self-contained. We introduce
matrix-valued $(\Theta, \Theta^*)$-Gabor frames in the matrix-valued signal space $ L^2(G, \mathbb{C}^{n\times n})$ over LCA groups in Section \ref{SecIII}, where $G$ is a LCA group, $n$ is a positive integer and $\Theta$ is a bounded linear operator acting on  $ L^2(G, \mathbb{C}^{n\times n})$. In $(\Theta, \Theta^*)$-Gabor frames both the lower frame condition and upper frame condition are controlled by $\Theta$.
 Proposition \ref{prop3.4} gives sufficient condition for a matrix-valued Gabor frame to be $(\Theta, \Theta^*)$-Gabor frame in terms of bounded belowness of $\Theta$. Frame preserving maps in terms of hyponormal operators are given in Proposition \ref{prop1} and Proposition \ref{1a}. Theorem \ref{thm2} provides existence of tight matrix-valued $(\Theta, \Theta^*)$-Gabor frames in  $ L^2(G, \mathbb{C}^{n\times n})$. Proposition \ref{2a} shows that $(\Theta, \Theta^*)$-Gabor frames are preserved under adjointable hyponormal  operators. A characterization for the existence of  $(\Theta, \Theta^*)$-Gabor frames in $ L^2(G, \mathbb{C}^{n\times n})$ is given in Theorem \ref{3.5}. Two different perturbation results which gives stability of frame conditions  in terms of window functions and operators are given in Theorem \ref{pert} and Theorem \ref{sum}. Examples and counter-examples are given to illustrate our results.

\section{Preliminaries}\label{SecII}
Throughout the paper, symbol $\mathbb{Z}$ and $\mathbb{C}$ denote the set of integers and complex numbers, respectively. $\mathbb{T}$ denote the unit circle group. Let $G$ be a second countable locally compact abelian group equipped with the  Hausdorff topology. We recall that  a character on $G$ is the map  $\gamma$  $G$ into itself which satisfies $\gamma(x+y)=\gamma(x)\gamma(y)$ for all $x,y\in G$. The \emph{dual group} of $G$, denoted by $\hat{G}$, is the collection of all continuous characters on $G$ which forms a locally compact abelian group under the operation defined by $(\gamma + \gamma')(x):= \gamma(x)\gamma'(x)$, where $ \gamma,\gamma'\in \hat{G}$ and $x\in G$ and an appropriate topology.  It is well known that on a LCA group $G$ there exists a Haar measure which is unique upto a positive scalar multiple, see \cite{Foll} for details.
The symbols $\mu_{G}$ and $\mu_{\hat{G}}$ denote the Haar measure on $G$ and $\hat{G}$, respectively. A \emph{lattice} of $G$ is a discrete subgroup $\Lambda$ of $G$ for which $G/\Lambda$ is compact. The annihilator of $\Lambda$, denoted by $\Lambda^\perp$,  is defined by $\Lambda^\perp= \{\gamma \in \hat{G}\ | \ \gamma(x)=1,  \ x\in \Lambda\}$. Note that  $\Lambda^\perp$  is a lattice in  $\hat{G}$. The \emph{fundamental domain} associated with the lattice $\Lambda^\perp$ of $\hat{G}$, denoted by $V$, is a Borel measurable relatively compact set in $\hat{G}$ such that $\hat{G}= \cup_{w\in \Lambda^{\perp}}(w+V), \ (w+V)\cap(w'+V) = \emptyset$ for $w\neq w', w,w'\in \Lambda^\perp$. The collection of all continuous automorphisms on $G$ is denoted by  Aut$G$. As is standard $L^2(G)$ denote the  space of measurable square integrable functions over $G$.  The   \emph{Fourier transform} of a function $f$ in  $L^{1}\bigcap L^{2}(G)$ is defined as
\begin{align*}
\widehat{f}(\gamma) =  \int_{G} f(x) \overline{\gamma(x)} d\mu_{G}(x), \ \ \gamma \in \widehat{G}
\end{align*}
Note that the Fourier transform can be extended isometrically to $L^{2}(G)$, see \cite{Foll}.

  \subsection{The Space $ L^2(G, \mathbb{C}^{n\times n})$} Throughout the paper, the  matrix-valued functions are denoted by bold letters. Let $n$ be a positive integer. The space of matrix-valued functions over $G$, denoted by $ L^2(G, \mathbb{C}^{n\times n})$, is defined as
\begin{align*}
 L^2(G, \mathbb{C}^{n\times n}) := \Big\{\mathbf{f} = \big[f_{i j}\big]_{1 \leq i, j \leq n}
 :  f_{ij} \in  L^2(G)\ (1 \leq i, \ j \leq n)\Big\},
\end{align*}
where $\big[f_{i j}\big]_{1 \leq i, j \leq n}$ is matrix of order $n$ with entries $f_{i j}$. The functions $f_{ij}$  are called \emph{components} or \emph{atoms}  of $\mathbf{f}$. The  Frobenius norm on $L^2(G, \mathbb{C}^{n\times n})$ is given by
 \begin{align}\label{eqnorm1}
\|\mathbf{f}\| = \Big(\sum\limits_{i,j =1}^{n}\int_{G}|f_{ij}|^2 d\mu_{G} \Big)^{\frac{1}{2}}.
\end{align}
It is easy to see that  $L^2(G, \mathbb{C}^{n\times n})$ is a Banach space with respect to the Frobenius norm given in \eqref{eqnorm1}.

The integral of   a function
 $\mathbf{f} = \big[f_{i j}\big]_{1 \leq i, j \leq n} \in L^2(G, \mathbb{C}^{n\times n})$  is defined as
\begin{align*}
\int_{G}\mathbf{f}d\mu_{G} = \left[\int_{G}f_{i j}d\mu_{G}\right]_{1 \leq i, j \leq n}
\end{align*}
For $\mathbf{f},  \mathbf{g} \in L^2(G, \mathbb{C}^{n\times n})$, the matrix-valued inner product is defined as
 \begin{align}\label{dminp}
\langle \mathbf{f},  \mathbf{g}\rangle = \int_{G}\mathbf{f}(x) \mathbf{g}^*(x) d\mu_{G}.
\end{align}
Here, $*$ denotes the transpose and the complex conjugate. One may observe that the matrix-valued inner product given in  \eqref{dminp}  is not an  inner product in usual sense. Further, a bounded linear operator on $ L^2(G, \mathbb{C}^{n\times n})$ may not be adjointable with respect to the matrix-valued product given in \eqref{dminp}.

Let  tr$A$ denotes trace of the matrix $A$. The space $L^2(G, \mathbb{C}^{n \times n})$ becomes a Hilbert space  with respect to the inner product  $\langle \cdot,\cdot \rangle_{o}$ defined by
 \begin{align*}
  \langle \mathbf{f},\mathbf{g}\rangle_o = \text{tr}\langle \mathbf{f},\mathbf{g}\rangle, \  \ \mathbf{f},  \  \mathbf{g} \in L^2(G, \mathbb{C}^{n\times n}),
   \end{align*}
   and $\langle \cdot,\cdot \rangle_{o}$ generates the Frobenius norm:
 $||\mathbf{f}||^2  = \langle \mathbf{f},\mathbf{f} \rangle_o, \ \mathbf{f} \in L^2(G, \mathbb{C}^{n\times n})$.

\begin{defn}
A bounded linear operator $U$ on $ L^2(G, \mathbb{C}^{n\times n})$ is said to be \emph{hyponormal} if $\text{tr}\langle UU^*\mathbf{f}, \mathbf{f}\rangle \leq \text{tr}\langle U^*U\mathbf{f}, \mathbf{f}\rangle$, for all
$ \mathbf{f} \in L^2(G, \mathbb{C}^{n\times n})$. That is,  $\|U^* \mathbf{f}\| \leq \|U\mathbf{f}\|$ for all  $\mathbf{f} \in L^2(G, \mathbb{C}^{n\times n})$.
\end{defn}
For fundamental properties of hyponormal operators, we refer to \cite{Hypo}.
\subsection{Matrix-Valued Gabor Frames in $L^2(G, \mathbb{C}^{n\times n})$}
 Let $\Lambda_0$ be  a  finite subset of $\mathbb{N}$,  $B\in$ Aut$G$, $C\in$ Aut$\widehat{G}$, $\Lambda$ be a lattice in $G$ and  $\Lambda'$  a lattice in $\widehat{G}$.

Write
\begin{align*}
& \Phi_{\Lambda_0}: = \{\Phi_l\}_{l \in \Lambda_0} \subset  L^2(G, \mathbb{C}^{n\times n}),\\
& \texttt{G}(C, B, \Phi_{\Lambda_0}) : =  \{E_{Cm}T_{Bk}\Phi_l\}_{l\in \Lambda_0,k\in \Lambda,m\in \Lambda'} \subset L^2(G, \mathbb{C}^{n\times n}).
\end{align*}
For $a\in G$ and $\eta \in \hat{G}$, we consider following operators  on  $ L^2(G, \mathbb{C}^{n\times n})$.
\begin{align*}
   T_a\mathbf{f}(x) &=\mathbf{f}(xa^{-1}) \quad (\text{Translation operator} ),\\
 E_\eta\mathbf{f}(x) & =\eta(x)\mathbf{f}(x) \quad (\text{Modulation operator}).
\end{align*}
For $l\in \Lambda_0$, let $\Phi_l \in L^2(G, \mathbb{C}^{n\times n})$ be given by $\Phi_l(x) = \begin{bmatrix} \phi_{ij}^{(l)}(x) \end{bmatrix}_{n\times n}$. Let \break $B \in $  Aut$G$   and $C \in$ Aut$\widehat{G}$.  A collection  of the form
\begin{align*}
\texttt{G}(C, B, \Phi_{\Lambda_0}) : =  \{E_{Cm}T_{Bk}\Phi_l\}_{l\in \Lambda_0,k\in \Lambda,m\in \Lambda'}
\end{align*}
is called the  \emph{matrix-valued Gabor system} in the  space  $L^2(G, \mathbb{C}^{n\times n})$ over LCA group $G$.  The functions $\Phi_l$ are called the \emph{matrix-valued Gabor  window  functions}.

\begin{defn}
 A frame of the form $\texttt{G}(C, B, \Phi_{\Lambda_0})$   for $L^2(G, \mathbb{C}^{n\times n})$ is called a \emph{matrix-valued Gabor frame}. That is, the inequality (\emph{frame inequality})
\begin{align*}
a_o \|\mathbf{f}\|^2\leq \sum _{l\in \Lambda_0}\sum_{k\in \Lambda,m\in \Lambda'}\Big\|\Big\langle  E_{Cm}T_{Bk}\Phi_l,\mathbf{f} \Big\rangle \Big\|^2 \leq b_{o} \|\mathbf{f}\|^2, \ \mathbf{f} \in L^2(G, \mathbb{C}^{n\times n}),
\end{align*}
holds for  some positive scalars $ a_{o}$ and $b_{o}$. As in case of ordinary frames, $ a_{o}$ and $b_{o}$ are called frame bounds.
\end{defn}
 Let $\mathcal{M}_n(\mathbb{C})$ be the  complex vector space of all $n \times n$ complex matrices. The space
\begin{align*}
&\ell^2(\Lambda_0 \times \Lambda \times \Lambda',\mathcal{M}_n(\mathbb{C}))\\
& := \Big\{\{M_{l,j,k}\}_{l \in \Lambda_0,j \in \Lambda, k \in \Lambda'}\subset  \mathcal{M}_n(\mathbb{C}): \sum_{l \in \Lambda_0}\sum\limits_{j \in \Lambda, k \in \Lambda'}\|M_{l,j,k}\|^2<\infty\Big\}
 \end{align*}
 is a Hilbert space  and its related  norm is given by
\begin{align*}
 \| \{M_{l,j,k}\}_{l \in \Lambda_0, j \in \Lambda, k \in \Lambda'} \| = \Big( \sum\limits_{l \in \Lambda_0} \sum\limits_{ j \in \Lambda, k \in \Lambda'}\|M_{l,j,k}\|^2\Big)^{\frac{1}{2}}.
\end{align*}

If $\texttt{G}(C, B, \Phi_{\Lambda_0})$ is a frame   for $L^2(G, \mathbb{C}^{n\times n})$, then the map
\begin{align*}
&V: \ell^2(\Lambda_0 \times \Lambda \times \Lambda',\mathcal{M}_n(\mathbb{C})) \rightarrow L^2(G, \mathbb{C}^{n\times n}) \  \text{defined by}\\
&V:  \{ M_{l,k,m} \}_{l\in \Lambda_0,k\in \Lambda,m\in \Lambda'}  \mapsto \sum _{l\in \Lambda_0} \sum_{k\in \Lambda,m\in \Lambda'} M_{l,k,m} E_{Cm}T_{Bk}\Phi_l
\end{align*}
is called the   \emph{synthesis operator} (or the \emph{pre-frame operator}), associated with $\texttt{G}(C, B, \Phi_{\Lambda_0})$.
The \emph{analysis}  \emph{operator} is the  map
\begin{align*}
& W:L^2(G, \mathbb{C}^{n\times n}) \rightarrow     \ell^2(\Lambda_0\times \Lambda \times \Lambda',\mathcal{M}_n(\mathbb{C})) \ \text{given  by}\\
&W:  \mathbf{f}  \mapsto  \Big\{ \langle \mathbf{f},E_{Cm}T_{Bk}\Phi_l \rangle\Big\}_{l\in \Lambda_0,k\in \Lambda,m\in \Lambda'}.
\end{align*}
The frame operator of $\texttt{G}(C, B, \Phi_{\Lambda_0})$ is the composition $S=V W$ on the space $L^2(G, \mathbb{C}^{n\times n})$ which is given by
\begin{align*}
S: \mathbf{f} \mapsto  \sum _{l\in \Lambda_0} \sum_{k\in \Lambda,m\in \Lambda'}\langle \mathbf{f},E_{Cm}T_{Bk}\Phi_l \rangle E_{Cm}T_{Bk}\Phi_l,
\end{align*}
 $ \mathbf{f} \in L^2(G, \mathbb{C}^{n\times n})$. The frame operator is  bounded, linear and invertible on $L^2(G, \mathbb{C}^{n\times n})$. We refer to \cite{OC2, Groch} for basic theory of Gabor frames.

 The following example will be used in illustration of results.

\begin{ex}\cite[Example 3.1]{Jind}\label{exb1}
Let $G$ be the torus group. Its  dual group is $\widehat{G}=\mathbb{Z}$. Fix a lattice $\Lambda=\Big\{0,\frac{1}{8},\dots, \frac{7}{8}\Big\}$. Then $\Lambda^{\perp}=8\mathbb{Z}$ with fundamental domain $V=\mathbb{Z}_8=\{0,1,\dots,7\}$. Let $\phi_1,\phi_2\in L^2(\mathbb{T})$ be such that
\begin{align*}
 \widehat{\phi_1}(\gamma)=\chi_{\mathbb{Z}_8}(\gamma) \quad  \text{and} \quad   \widehat{\phi_2}(\gamma)=\frac{1}{2}\chi_{\mathbb{Z}_8}(\gamma) \ \text{in} \ L^2(\mathbb{Z})\ \text{for} \ \gamma\in \mathbb{Z}.
 \end{align*}
For $B\in$ Aut$G$ and  $C\in$ Aut$\widehat{G}$, consider the Gabor system $\{E_{Cm}T_{Bk}\phi_1\}_{k\in \Lambda \atop  m \in \Lambda^{\perp}}$ $= \{E_{8m}T_{k}\phi_1\}_{k\in \Lambda, m \in \mathbb{Z}} $. Set $\phi_{m}^{(1)}(\xi)=E_{8m}\phi_1(\xi)$, $ m\in \mathbb{Z}$, $\xi\in [0,1[$. Since $E_{8m}T_k\phi_1(\xi)=T_kE_{8m}\phi_1(\xi)$, thus by taking
 $\Lambda_m := \Lambda$, one can write  \break  $\{T_k\phi_{m}^{(1)}\}_{k \in \Lambda,  m\in \mathbb{Z}}$ $= \{E_{8m}T_k\phi_1\}_{k\in \Lambda, m\in \mathbb{Z}}$.

 Define
 \begin{align*}
  G_0(\gamma) & =\sum_{m \in \mathbb{Z}} \mu_{\widehat{G}}(V)\Big|\widehat{\phi^{(1)}_{m}}(\gamma)\Big|^2, \ \gamma \in \mathbb{Z},
   \intertext{and}
 G_1(\gamma) & =\sum_{m \in \mathbb{Z}} \mu_{\widehat{G}}(V)\sum_{w\in \Lambda^{\perp}\backslash \{0\}}|\widehat{\phi^{(1)}_{m}}(\gamma)\widehat{\phi^{(1)}_{m}}(\gamma+w)|, \  \gamma \in \mathbb{Z}.
 \end{align*}

Then, using  $\widehat{\phi_m^{(1)}}(\gamma)=  \widehat{E_{8m}\phi_1}(\gamma)= T_{8m}\widehat{\phi_1}(\gamma)$, $\gamma \in \mathbb{Z}$,  we have
 \begin{align*}
&G_0(\gamma)=\sum_{m \in \mathbb{Z}}8|\chi_{\mathbb{Z}_8}(\gamma-8m)|^2=8 \ \text{for} \ \gamma \in \mathbb{Z},
 \intertext{and}
&G_1(\gamma)=\sum\limits_{m \in \mathbb{Z}}8\sum_{a\in \mathbb{Z} \backslash \{0\}}|\chi_{\mathbb{Z}_8}(\gamma-8m)\chi_{\mathbb{Z}_8}(\gamma+8a-8m)|^2=0 \ \text{for} \ \gamma \in \mathbb{Z}.
\end{align*}

Therefore,  by \cite[Theorem 21.6.1]{OC2}, the Gabor system  $\{E_{8m}T_k\phi_1\}_{k\in \Lambda, m \in \mathbb{Z}}$ is a $8$-tight frame for $L^2(G)$. Similarly,  $\{E_{8m}T_k\phi_2\}_{k\in \Lambda, m \in \mathbb{Z}}$ is a $2$-tight frame for $L^2(G)$.
\end{ex}
\section{Matrix-Valued $(\Theta, \Theta^*)$-Gabor Frames}\label{SecIII}
We begin this section with the definition of a  matrix-valued $(\Theta, \Theta^*)$-Gabor  frame in the  matrix-valued function space $L^2(G, \mathbb{C}^{n\times n})$.
\begin{defn} \label{def3.2x}
Let $\Theta$ be a bounded linear operator acting on  $L^2(G, \mathbb{C}^{n\times n})$. A countable family  of vectors   $\mathcal{G}(C, B, \Phi_{\Lambda_0}) : =  \{E_{Cm}T_{Bk}\Phi_l\}_{l\in \Lambda_0, k\in \Lambda,m\in \Lambda'}$ in $ L^2(G, \mathbb{C}^{n\times n})$ is called a \emph{matrix-valued $(\Theta, \Theta^*)$-Gabor frame}  for $L^2(G, \mathbb{C}^{n\times n})$ if for all  $\mathbf{f} \in L^2(G, \mathbb{C}^{n\times n})$,
\begin{align}\label{eq1}
\alpha_{o} \|\Theta^* \mathbf{f}\|^2\leq \sum _{l\in \Lambda_0}\sum_{k\in \Lambda,m\in \Lambda'}\Big\|\Big\langle  E_{Cm}T_{Bk}\Phi_l,\mathbf{f} \Big\rangle \Big\|^2 \leq \beta_{o} \|\Theta \mathbf{f}\|^2
\end{align}
holds for  some positive scalars $\alpha_{o}$ and $\beta_{o}$.
\end{defn}
The positive scalars $\alpha_{o}$ and  $\beta_{o}$ are called \textit{lower} and \emph{upper frame  bounds} of the $(\Theta, \Theta^*)$-Gabor frame $\mathcal{G}(C, B, \Phi_{\Lambda_0})$.
If $\alpha_{o} = \beta_{o}$, then we say that $\mathcal{G}(C, B, \Phi_{\Lambda_0})$  is a $\alpha_{o}$-$(\Theta, \Theta^*)$-tight matrix-valued Gabor frame for
$L^2(G, \mathbb{C}^{n\times n})$.

\begin{rem}
If $\Theta$ is the identity operator on $L^2(G, \mathbb{C}^{n\times n})$, then a  matrix-valued $(\Theta, \Theta^*)$-Gabor frame for $L^2(G, \mathbb{C}^{n\times n})$ is the standard  matrix-valued Gabor frame  for $L^2(G, \mathbb{C}^{n\times n})$. However, if $\Theta$ is a non-identity operator on $L^2(G, \mathbb{C}^{n\times n})$, then a  matrix-valued $(\Theta, \Theta^*)$-Gabor frame for $L^2(G, \mathbb{C}^{n\times n})$ need not be the standard  matrix-valued Gabor frame  for $L^2(G, \mathbb{C}^{n\times n})$. For example, consider the tight Gabor frames $\{E_{8m}T_{k}\phi_l\}_{k\in \Lambda,m\in \mathbb{Z}}, (l = 1,2)$ for $L^2(G)$ given in Example \ref{exb1}.
Let $\Phi_1=\begin{bmatrix}
0 & \phi_1 \\
0 & \phi_1
\end{bmatrix}, \Phi_2=\begin{bmatrix}
0 & \phi_2 \\
0 & \phi_2
\end{bmatrix}$. Then, $\Phi_1, \Phi_2 \in L^2(G,\mathbb{C}^{2\times 2})$.
For any  $\mathbf{f}=\begin{bmatrix}
f_{11}& f_{12}\\
f_{21}& f_{22}
\end{bmatrix}$
in $L^2(G,\mathbb{C}^{2\times 2})$,  we have
\begin{align*}
&\sum_{l\in\{1,2\}}\sum_{k\in \Lambda,  m\in\mathbb{Z}}\Big\|\Big\langle  E_{8m}T_{k}\Phi_l,\mathbf{f} \Big\rangle \Big\|^2\\
& = \sum_{l\in\{1,2\}} \sum_{k\in \Lambda,  m\in\mathbb{Z}}2\Big(\big|\int_{G}E_{8m}T_{k}\phi_l\overline{f_{12}} d\mu_{G}\big|^2+ \big|\int_{G}E_{8m}T_{k}\phi_l\overline{f_{22}} d\mu_{G}\big|^2 \Big)\\
&=20\Big(\|f_{12}\|^2+\|f_{22}\|^2\Big).
\end{align*}
Therefore, for  $\mathbf{f}_o=\begin{bmatrix}
f & 0\\
f & 0
\end{bmatrix}$, where  $0\neq f \in L^2(G)$, we have
\begin{align*}
\sum_{l\in\{1,2\}}\sum_{k\in \Lambda,  m\in\mathbb{Z}}\Big\|\Big\langle  E_{8m}T_{k}\Phi_l,\mathbf{f}_o \Big\rangle \Big\|^2=0.
\end{align*}
Thus, $\{E_{8m}T_{k}\Phi_l\}_{l\in \{1,2\},k\in \Lambda,m\in \mathbb{Z}}$  is  not a matrix-valued Gabor frame for $L^2(G,\mathbb{C}^{2\times 2})$. But the family  $\{E_{8m}T_{k}\Phi_l\}_{l\in \{1,2\},k\in \Lambda,m\in \mathbb{Z}}$ is  a $(\Theta_o, \Theta_o^*)$-Gabor frame for $L^2(G,\mathbb{C}^{2\times 2})$, where $\Theta_o$ is a bounded linear operator  on  $L^2(G,\mathbb{C}^{2\times 2})$  given by
\begin{align*}
\Theta_o: \mathbf{f} \mapsto \begin{bmatrix}
0 & f_{12}\\
0 & f_{22}
\end{bmatrix}, \ \mathbf{f} = \begin{bmatrix}
f_{11} & f_{12}\\
f_{21}& f_{22}
\end{bmatrix} \in L^2(G, \mathbb{C}^{2\times 2}).
\end{align*}
It is easy to see that  $\Theta_o^* = \Theta_o$.
Therefore, for any $\mathbf{f} \in L^2(G, \mathbb{C}^{2\times 2})$,  we have
\begin{align*}
20\|\Theta_o^* \mathbf{f}\|^2 = \sum_{l\in\{1,2\}} \sum_{k\in \Lambda, m\in\mathbb{Z}}\Big\|\Big\langle  E_{8m}T_{k}\Phi_l,\mathbf{f} \Big\rangle \Big\|^2= 20 \|\Theta_o\mathbf{f}\|^2.
\end{align*}
Hence, $\{E_{8m}T_{k}\Phi_l\}_{l\in \{1,2\},k\in \Lambda,m\in \mathbb{Z}}$
is  a  matrix-valued $(\Theta_o, \Theta_o^*)$-Gabor frame for $L^2(G,\mathbb{C}^{2\times 2})$.
\end{rem}
\begin{rem}\label{exper1}
It is mentioned in \cite{Jind} that a  matrix-valued Gabor frame for  $L^2(G,\mathbb{C}^{n\times n})$ is always a $\Theta$-Gabor frame  for $L^2(G,\mathbb{C}^{n\times n})$ where $\Theta$ is a  bounded linear operator on $L^2(G, \mathbb{C}^{n\times n})$. However, this is not true in the case of
$(\Theta, \Theta^*)$-matrix-valued Gabor frame. Precisely, a  matrix-valued Gabor frame for  $L^2(G,\mathbb{C}^{n\times n})$ need not be a $(\Theta, \Theta^*)$-Gabor frame  for $L^2(G,\mathbb{C}^{n\times n})$. For example, let $G$ be the torus group and $\{E_{8m}T_{k}\phi_l\}_{k\in \Lambda,m\in \mathbb{Z}}$ $(l = 1,2)$  be the tight Gabor frames for $L^2(G)$ given in Example \ref{exb1}. Let $\Phi_1$, $\Phi_2 \in L^2(G,\mathbb{C}^{2\times 2})$ be given by
\begin{align*}
\Phi_1=\begin{bmatrix}
0 &\phi_1 \\
\phi_2 & 0
\end{bmatrix} \quad \text{and} \quad  \Phi_2=\begin{bmatrix}
0 &\phi_2 \\
\phi_1 & 0
\end{bmatrix}.
\end{align*}
Then, $\{E_{8m}T_{k}\Phi_l\}_{l\in \{1,2\},k\in \Lambda,m\in \mathbb{Z}}$ is  a $10$-tight  matrix-valued Gabor  frame for $L^2(G,\mathbb{C}^{2\times 2})$.
Define
$\Theta: L^2(G,\mathbb{C}^{2\times 2}) \rightarrow L^2(G,\mathbb{C}^{2\times 2})$ by
\begin{align*}
\Theta \colon \mathbf{f} \mapsto \begin{bmatrix}
f_{11} & 0 \\
0 & 0
\end{bmatrix}, \ \mathbf{f} = \begin{bmatrix}
f_{11} & f_{12} \\
f_{21}& f_{22}
\end{bmatrix} \in L^2(G, \mathbb{C}^{2\times 2}).
\end{align*}
Then, $\Theta$ is a bounded linear operator. If possible, let $\{ E_{8m}T_{k}\Phi_l\}_{l\in \{1,2\},k\in \Lambda,m\in \mathbb{Z}}$ be a $(\Theta, \Theta^*)$- Gabor frame for $L^2(G,\mathbb{C}^{2\times 2})$ with bounds $a,b$. Then, for  $\mathbf{f}_o=\begin{bmatrix}
0 & f \\
f & f \\
\end{bmatrix}$, where  $0\neq f \in L^2(G)$, we have
\begin{align*}
\sum_{l\in\{1,2\}}\sum_{k\in \Lambda,  m\in\mathbb{Z}}\Big\|\Big\langle  E_{8m}T_{k}\Phi_l,\mathbf{f}_o \Big\rangle \Big\|^2=10 \|\mathbf{f}_o\|^2=30\|f\|^2 > 0= b\|\Theta \mathbf{f}_o \|^2,
\end{align*}
which is a contradiction. Hence, $\{ E_{8m}T_{k}\Phi_l\}_{l\in \{1,2\},k\in \Lambda,m\in \mathbb{Z}}$ is not a $(\Theta, \Theta^*)$- Gabor frame for $L^2(G,\mathbb{C}^{2\times 2})$.
\end{rem}
Now, we show that a matrix-valued Gabor frame for  $L^2(G,\mathbb{C}^{n\times n})$ becomes a $(\Theta, \Theta^*)$-Gabor frame  for $L^2(G,\mathbb{C}^{n\times n})$ provided  $\Theta$ is  bounded below.
\begin{prop}\label{prop3.4}
Let  $\{E_{Cm}T_{Bk}\Phi_l\}_{l\in \Lambda_0, k\in \Lambda, m\in \Lambda'}$ be a  matrix-valued Gabor frame for $L^2( G, \mathbb{C}^{n\times n})$. Let  $\Theta$ be a bounded linear operator acting on the space  $L^2( G, \mathbb{C}^{n\times n})$ which is bounded below. Then, the collection  $\{ E_{Cm}T_{Bk}\Phi_l\}_{l\in \Lambda_0, k\in \Lambda, m\in \Lambda'}$ is a matrix-valued $(\Theta, \Theta^*)$-Gabor frame for $L^2( G, \mathbb{C}^{n\times n})$.
\end{prop}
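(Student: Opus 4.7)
The plan is to derive the two inequalities in Definition~\ref{def3.2x} directly from the two inequalities of the ordinary matrix-valued Gabor frame condition by inserting $\Theta$ and $\Theta^*$ through two elementary inequalities: boundedness of $\Theta^*$ for the lower bound, and bounded belowness of $\Theta$ for the upper bound. Here $\Theta^*$ denotes the Hilbert adjoint with respect to the inner product $\langle\cdot,\cdot\rangle_o$, which exists since $L^2(G,\mathbb{C}^{n\times n})$ is a Hilbert space under this inner product.

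First I would set up the hypothesis quantitatively. Let $a_o,b_o>0$ be frame bounds of the given matrix-valued Gabor frame, so
\begin{align*}
a_o\|\mathbf{f}\|^2 \leq \sum_{l\in\Lambda_0}\sum_{k\in\Lambda,\,m\in\Lambda'}\bigl\|\langle E_{Cm}T_{Bk}\Phi_l,\mathbf{f}\rangle\bigr\|^2 \leq b_o\|\mathbf{f}\|^2
\end{align*}
for every $\mathbf{f}\in L^2(G,\mathbb{C}^{n\times n})$. By bounded belowness of $\Theta$, fix $c>0$ such that $\|\Theta\mathbf{f}\|\geq c\|\mathbf{f}\|$ for all $\mathbf{f}$, equivalently $\|\mathbf{f}\|^2\leq c^{-2}\|\Theta\mathbf{f}\|^2$. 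Also, since $\Theta$ is bounded, $\|\Theta^*\mathbf{f}\|\leq \|\Theta\|\,\|\mathbf{f}\|$, hence $\|\mathbf{f}\|^2\geq \|\Theta\|^{-2}\|\Theta^*\mathbf{f}\|^2$ (the case $\Theta=0$ is vacuous, because bounded belowness forces the trivial space).

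Next I would plug these two pointwise norm comparisons into the frame inequality: the upper frame bound yields
\begin{align*}
\sum_{l\in\Lambda_0}\sum_{k\in\Lambda,\,m\in\Lambda'}\bigl\|\langle E_{Cm}T_{Bk}\Phi_l,\mathbf{f}\rangle\bigr\|^2 \leq b_o\|\mathbf{f}\|^2 \leq \frac{b_o}{c^2}\|\Theta\mathbf{f}\|^2,
\end{align*}
while the lower frame bound, combined with $\|\mathbf{f}\|^2\geq \|\Theta\|^{-2}\|\Theta^*\mathbf{f}\|^2$, gives
\begin{align*}
\frac{a_o}{\|\Theta\|^2}\|\Theta^*\mathbf{f}\|^2 \leq a_o\|\mathbf{f}\|^2 \leq \sum_{l\in\Lambda_0}\sum_{k\in\Lambda,\,m\in\Lambda'}\bigl\|\langle E_{Cm}T_{Bk}\Phi_l,\mathbf{f}\rangle\bigr\|^2.
\end{align*}
Taking $\alpha_o:=a_o/\|\Theta\|^2$ and $\beta_o:=b_o/c^2$ yields \eqref{eq1}, which is the required $(\Theta,\Theta^*)$-Gabor frame inequality.

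There is no serious obstacle here; the proof is essentially a two-line sandwich estimate. The only point to watch is that the adjoint $\Theta^*$ is to be understood with respect to the genuine Hilbert space inner product $\langle\cdot,\cdot\rangle_o=\mathrm{tr}\langle\cdot,\cdot\rangle$ (not the matrix-valued bracket), so that the standard operator identity $\|\Theta^*\|=\|\Theta\|$ is available; this is consistent with the ambient Hilbert space structure set up in Section~\ref{SecII}.
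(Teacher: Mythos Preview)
Your proof is correct and follows essentially the same route as the paper's own proof: both sandwich the frame inequality using $\|\Theta^*\mathbf{f}\|\leq\|\Theta^*\|\,\|\mathbf{f}\|$ for the lower bound and the bounded-below constant for the upper bound, arriving at the same pair of frame bounds (your $\|\Theta\|$ versus the paper's $\|\Theta^*\|$ is immaterial since they are equal). Your explicit remark that $\Theta^*$ is the Hilbert adjoint with respect to $\langle\cdot,\cdot\rangle_o$ is a helpful clarification that the paper leaves implicit.
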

\begin{proof}
Let $\gamma$ and $\delta$ be frame bounds of $\{E_{Cm}T_{Bk}\Phi_l\}_{l\in \Lambda_0, k\in \Lambda, m\in \Lambda'}$. Let $\Theta$ be bounded below by a constant $\alpha$, that is, $\|\Theta \mathbf{f}\| \geq \alpha\|\mathbf{f}\|$ for all $\mathbf{f}$ in $L^2( G, \mathbb{C}^{n\times n})$.
Then, for any $ \mathbf{f} \in L^2( G, \mathbb{C}^{n\times n})$, we have
\begin{align*}
  \frac{\gamma}{\|\Theta^{*}\|^2}\|\Theta^{*} \mathbf{f}\|^2  \leq \gamma \| \mathbf{f}\|^2 \leq \sum _{l\in \Lambda_0}\sum_{k\in \Lambda,m\in \Lambda'}\|\langle   E_{Cm}T_{Bk}\Phi_l, \mathbf{f} \rangle\|^2,
\end{align*}
and
 \begin{align*}
 \sum _{l\in \Lambda_0}\sum_{k\in \Lambda,m\in \Lambda'}\|\langle E_{Cm}T_{Bk}\Phi_l, \mathbf{f}\rangle\|^2 \leq \delta \|\mathbf{f}\|^2
  \leq  \frac{\delta}{\alpha^2} \|\Theta \mathbf{f}\|^2.
\end{align*}
Thus, $\{E_{Cm}T_{Bk}\Phi_l\}_{l\in \Lambda_0, k\in \Lambda, m\in \Lambda'}$ is a matrix-valued $(\Theta, \Theta^*)$-Gabor frame for the space
$L^2( G, \mathbb{C}^{n\times n})$ with frame bounds $ \frac{\gamma}{\|\Theta^{*}\|^2}$ and $\frac{\delta}{\alpha^2}$.
\end{proof}

Now, we discuss  relations between  hyponormal operators  on $L^2(G, \mathbb{C}^{n\times n})$ and matrix-valued $\lambda_{o}$-$(\Theta, \Theta^*)$-tight  frames for $L^2(G, \mathbb{C}^{n\times n})$. By Definition \ref{def3.2x}, one may observe that a  bounded linear operator $\Theta$ on $L^2(G, \mathbb{C}^{n\times n})$ is hyponormal if there exists  a matrix-valued $\lambda_{o}$-$(\Theta, \Theta^*)$-tight  frame for the space  $L^2(G, \mathbb{C}^{n\times n})$. Indeed, if  $\{\mathbf{f}_{ k}\}_{k \in I}$ is a matrix-valued $\lambda_{o}$-$(\Theta, \Theta^*)$-tight  frame for $L^2(G, \mathbb{C}^{n\times n})$,
then by Definition \ref{def3.2x}, we have  $\|\Theta^*\mathbf{f}\| \leq \|\Theta \mathbf{f}\|$, for all $\mathbf{f} \in L^2(G, \mathbb{C}^{n\times n})$. Hence, $\Theta$  is a hyponormal operator on  $L^2(G, \mathbb{C}^{n\times n})$.

In order to see the other way round relationship, we first discuss some frame preserving properties of $(\Theta, \Theta^*)$-frames in $L^2(G, \mathbb{C}^{n\times n})$.  The following result says that the image of a frame in $L^2(G)$ under a hyponormal operator $\Theta$ is a $(\Theta, \Theta^*)$-frame for $L^2(G)$.
\begin{prop}\label{prop1}
Let $\{E_{Cm}T_{Bk}\phi_l\}_{l\in \Lambda_0, k\in \Lambda, m\in \Lambda'}$ be a Gabor frame for $L^2(G)$ and let $\Theta$ be a hyponormal operator on $L^2(G)$. Then,
$\{\Theta E_{Cm}T_{Bk}\phi_l\}_{l\in \Lambda_0, k\in \Lambda \atop  m\in \Lambda'}$ is a $(\Theta, \Theta^*)$-frame for $L^2(G)$.
\end{prop}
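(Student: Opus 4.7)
The plan is to verify both sides of the $(\Theta,\Theta^*)$-frame inequality by shifting the operator $\Theta$ to the other slot of the inner product and then applying the original Gabor frame inequality at the vector $\Theta^* f$ instead of $f$. Let $\gamma,\delta$ be frame bounds for the Gabor frame $\{E_{Cm}T_{Bk}\phi_l\}$, so that
\begin{align*}
\gamma\|g\|^2 \leq \sum_{l\in\Lambda_0}\sum_{k\in\Lambda,\,m\in\Lambda'}\bigl|\langle E_{Cm}T_{Bk}\phi_l,g\rangle\bigr|^2 \leq \delta\|g\|^2
\end{align*}
for every $g\in L^2(G)$.

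First I would observe that for each $f\in L^2(G)$, by definition of the Hilbert-space adjoint,
\begin{align*}
\langle \Theta E_{Cm}T_{Bk}\phi_l, f\rangle = \langle E_{Cm}T_{Bk}\phi_l, \Theta^* f\rangle,
\end{align*}
so the Gabor frame inequality applied to $g=\Theta^* f$ gives
\begin{align*}
\gamma\|\Theta^*f\|^2 \leq \sum_{l,k,m}\bigl|\langle \Theta E_{Cm}T_{Bk}\phi_l,f\rangle\bigr|^2 \leq \delta\|\Theta^*f\|^2.
\end{align*}
The left-hand inequality is already of the required form, giving the lower $(\Theta,\Theta^*)$-frame bound $\alpha_o=\gamma$.

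For the upper inequality, here is where hyponormality enters: by the definition recalled in the preliminaries, hyponormality of $\Theta$ means $\|\Theta^*f\|\leq \|\Theta f\|$ for all $f\in L^2(G)$. Combining this with the displayed upper estimate yields
\begin{align*}
\sum_{l,k,m}\bigl|\langle \Theta E_{Cm}T_{Bk}\phi_l,f\rangle\bigr|^2 \leq \delta\|\Theta^*f\|^2 \leq \delta\|\Theta f\|^2,
\end{align*}
so $\beta_o=\delta$ serves as the upper $(\Theta,\Theta^*)$-frame bound. Both conditions of Definition \ref{def3.2x} therefore hold for $\{\Theta E_{Cm}T_{Bk}\phi_l\}$.

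There is no real obstacle here; the entire argument is a one-line application of the adjoint identity followed by a substitution in the Gabor frame inequality, with hyponormality used solely to turn the bound $\delta\|\Theta^*f\|^2$ into the required $\delta\|\Theta f\|^2$. The only point that warrants care is making sure the adjoint $\Theta^*$ is well-defined in the scalar Hilbert space $L^2(G)$ (which it is, since $\Theta$ is bounded linear on a Hilbert space), and that the hyponormality definition from the matrix-valued setting specializes to $\|\Theta^*f\|\leq\|\Theta f\|$ in the scalar setting, which it does.
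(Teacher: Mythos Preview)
Your proof is correct and follows essentially the same approach as the paper: shift $\Theta$ across the inner product via the adjoint identity, apply the original Gabor frame inequality at $\Theta^*f$, and invoke hyponormality to convert the upper estimate $\delta\|\Theta^*f\|^2$ into $\delta\|\Theta f\|^2$. The only cosmetic difference is that the paper treats the upper and lower bounds in separate display blocks and uses the letters $\lambda,\mu$ for the frame bounds.
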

\begin{proof}
Let $\lambda$ and $\mu$ be lower and upper frame bounds of $\{E_{Cm}T_{Bk}\phi_l\}_{l\in \Lambda_0, k\in \Lambda \atop  m\in \Lambda'}$. Then, using the hyponormality of $\Theta$, for any $f \in L^2(G)$, we have
\begin{align*}
\sum _{l\in \Lambda_0}\sum_{k\in \Lambda,m\in \Lambda'}|\langle \Theta E_{Cm}T_{Bk}\phi_l, f \rangle|^2  & =\sum _{l\in \Lambda_0}\sum_{k\in \Lambda, m\in \Lambda'}|\langle E_{Cm}T_{Bk}\phi_l, \Theta^{*} f \rangle|^2\\
 &\leq \mu \|\Theta^{*} f\|^2\\
&\leq \mu \|\Theta f\|^2.
\intertext{Also}
 \lambda \|\Theta^{*} f\|^2 &\leq \sum _{l\in \Lambda_0}\sum_{k\in \Lambda,m\in \Lambda'}|\langle E_{Cm}T_{Bk}\phi_l, \Theta^{*} f \rangle|^2 \\
 &=\sum _{l\in \Lambda_0}\sum_{k\in \Lambda,m\in \Lambda'}|\langle \Theta E_{Cm}T_{Bk}\phi_l, f \rangle|^2
\end{align*}
 for all $f \in L^2(G)$. Hence, $\{\Theta E_{Cm}T_{Bk}\phi_l\}_{l\in \Lambda_0, k\in \Lambda, m\in \Lambda'}$ is a $(\Theta, \Theta^*)$-frame  for $L^2(G)$ with frame bounds $\lambda$ and $\mu$.
\end{proof}

\begin{rem}
Proposition \ref{prop1} is not true for matrix-valued frames in matrix-valued signal spaces $L^2(G, \mathbb{C}^{n\times n})$. This problem is related to adjointable operators on matrix-valued signal spaces with respect to matrix-valued inner product on the underlying space. For example, consider the tight Gabor frames  $\{E_{8m}T_{k}\phi_l\}_{k\in \Lambda \atop m \in \mathbb{Z}}$  $(l = 1, 2)$ for $L^2(G)$ given in Example \ref{exb1}. Let $\Phi_1$, $\Phi_2 \in L^2(G,\mathbb{C}^{2\times 2})$ be given by
\begin{align*}
  \Phi_1=\begin{bmatrix}
0 &\phi_1 \\
\phi_2 & 0
\end{bmatrix} \quad \text{and} \quad  \Phi_2=\begin{bmatrix}
  0 &\phi_2 \\
  \phi_1 & 0
\end{bmatrix}.
\end{align*}
Then, $\{E_{8m}T_{k}\Phi_l\}_{l\in \{1,2\},k\in \Lambda,m\in \mathbb{Z}}$ is  a $10$-tight  matrix-valued Gabor  frame for $L^2(G,\mathbb{C}^{2\times 2})$.

 Define
$\Theta: L^2(G,\mathbb{C}^{2\times 2}) \rightarrow L^2(G,\mathbb{C}^{2\times 2})$ by
\begin{align*}
\Theta \colon \mathbf{f} \mapsto \begin{bmatrix}
  f_{11} & 0 \\
  0 & 0
  \end{bmatrix}, \ \mathbf{f} = \begin{bmatrix}
f_{11} & f_{12} \\
f_{21}& f_{22}
 \end{bmatrix} \in L^2(G, \mathbb{C}^{2\times 2}).
 \end{align*}
Then, $\Theta$ is a bounded linear operator with adjoint $\Theta^*=\Theta$ and hence a  hyponormal operator. But $\Theta$ is not adjointable with respect to matrix-valued inner product on $L^2(G, \mathbb{C}^{2\times 2})$. That is, $\langle \Theta \mathbf{f},\mathbf{g}\rangle \ne \langle \mathbf{f}, \Theta^*\mathbf{g}\rangle$ for all $\mathbf{f},\mathbf{g}$ in $ L^2(G, \mathbb{C}^{2\times 2})$. Furthermore,  $\Theta E_{8m}T_{k}\Phi_l= \mathbf{O}$ for  $l\in \{1,2\}, k\in \Lambda, m\in \mathbb{Z}$. Hence, $\{\Theta E_{8m}T_{k}\Phi_l\}_{l\in \{1,2\},k\in \Lambda,m\in \mathbb{Z}}$ is not a $(\Theta, \Theta^*)$-frame for $L^2(G,\mathbb{C}^{2\times 2})$.
\end{rem}
The following result gives sufficient conditions on matrix-valued $\Theta$-frame preserving transformations acting on matrix-valued signal spaces in terms of adjointability of  $\Theta$.
 \begin{prop}\label{1a}
Let  $\{E_{Cm}T_{Bk}\Phi_l\}_{l\in \Lambda_0, k\in \Lambda, m\in \Lambda'}$ be a matrix-valued frame for the space $L^2( G, \mathbb{C}^{n\times n})$ with frame bounds $\gamma$ and $\delta$. Let  $\Theta$ be a hyponormal operator acting on $L^2( G, \mathbb{C}^{n\times n})$ which is adjointable with respect to the matrix-valued inner product. Then,  $\{\Theta E_{Cm}T_{Bk}\Phi_l\}_{l\in \Lambda_0, k\in \Lambda, m\in \Lambda'}$ is a matrix-valued
$(\Theta, \Theta^*)$-frame for $L^2( G, \mathbb{C}^{n\times n})$ with frame bounds $\gamma$ and $\delta$.
\end{prop}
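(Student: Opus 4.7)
The plan is to mimic the strategy used for the scalar version (Proposition \ref{prop1}), but to handle the subtlety that in the matrix-valued setting the bounded linear operator $\Theta$ need not be adjointable with respect to the matrix-valued inner product $\langle \cdot, \cdot \rangle$. Here that obstruction is removed by hypothesis, so the scalar argument can be transcribed almost verbatim once adjointability is used to move $\Theta$ across the matrix-valued pairing.

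First, I would fix an arbitrary $\mathbf{f} \in L^2(G, \mathbb{C}^{n\times n})$ and use the stated adjointability of $\Theta$ with respect to the matrix-valued inner product to rewrite, for each triple $(l,k,m)$,
\begin{align*}
\bigl\langle \Theta E_{Cm}T_{Bk}\Phi_l, \mathbf{f} \bigr\rangle = \bigl\langle E_{Cm}T_{Bk}\Phi_l, \Theta^{*}\mathbf{f} \bigr\rangle .
\end{align*}
Summing the squared Frobenius norms of both sides gives
\begin{align*}
\sum_{l\in \Lambda_0}\sum_{k\in \Lambda, m\in \Lambda'} \bigl\|\bigl\langle \Theta E_{Cm}T_{Bk}\Phi_l, \mathbf{f}\bigr\rangle\bigr\|^2
= \sum_{l\in \Lambda_0}\sum_{k\in \Lambda, m\in \Lambda'} \bigl\|\bigl\langle E_{Cm}T_{Bk}\Phi_l, \Theta^{*}\mathbf{f}\bigr\rangle\bigr\|^2 .
\end{align*}

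Next, I would apply the hypothesis that $\{E_{Cm}T_{Bk}\Phi_l\}$ is a matrix-valued frame with bounds $\gamma, \delta$, evaluated at the vector $\Theta^{*}\mathbf{f}\in L^2(G,\mathbb{C}^{n\times n})$, to obtain
\begin{align*}
\gamma \|\Theta^{*}\mathbf{f}\|^{2} \;\leq\; \sum_{l\in \Lambda_0}\sum_{k\in \Lambda, m\in \Lambda'} \bigl\|\bigl\langle E_{Cm}T_{Bk}\Phi_l, \Theta^{*}\mathbf{f}\bigr\rangle\bigr\|^2 \;\leq\; \delta \|\Theta^{*}\mathbf{f}\|^{2}.
\end{align*}
The lower bound $\gamma \|\Theta^{*}\mathbf{f}\|^{2}$ is already of the form required by Definition \ref{def3.2x}. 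To upgrade the upper bound to one in terms of $\|\Theta \mathbf{f}\|$, I would invoke the hyponormality of $\Theta$, which gives $\|\Theta^{*}\mathbf{f}\| \leq \|\Theta \mathbf{f}\|$ for every $\mathbf{f}$. Combining this with the previous chain yields
\begin{align*}
\gamma \|\Theta^{*}\mathbf{f}\|^{2} \;\leq\; \sum_{l\in \Lambda_0}\sum_{k\in \Lambda, m\in \Lambda'} \bigl\|\bigl\langle \Theta E_{Cm}T_{Bk}\Phi_l, \mathbf{f}\bigr\rangle\bigr\|^2 \;\leq\; \delta \|\Theta \mathbf{f}\|^{2},
\end{align*}
which is exactly the $(\Theta, \Theta^{*})$-frame inequality with bounds $\gamma$ and $\delta$.

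The only genuinely delicate point is the very first step: without the adjointability hypothesis, the identity $\langle \Theta \mathbf{f}, \mathbf{g}\rangle = \langle \mathbf{f}, \Theta^{*}\mathbf{g}\rangle$ for the matrix-valued pairing can fail, as illustrated by the counterexample preceding the proposition. Since the statement assumes this adjointability outright, the rest of the proof is a short combination of an index substitution $\mathbf{f}\mapsto \Theta^{*}\mathbf{f}$ in the original frame inequality and a single application of hyponormality; no further computation is needed.
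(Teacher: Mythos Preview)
Your proof is correct and follows essentially the same approach as the paper: use adjointability of $\Theta$ with respect to the matrix-valued inner product to rewrite $\langle \Theta E_{Cm}T_{Bk}\Phi_l,\mathbf{f}\rangle$ as $\langle E_{Cm}T_{Bk}\Phi_l,\Theta^{*}\mathbf{f}\rangle$, apply the frame inequality at $\Theta^{*}\mathbf{f}$, and then invoke hyponormality to replace $\|\Theta^{*}\mathbf{f}\|$ by $\|\Theta\mathbf{f}\|$ in the upper bound. The paper presents this in a single aligned chain, but the logic is identical to yours.
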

\begin{proof}
For any $ \mathbf{f} \in L^2( G, \mathbb{C}^{n\times n})$, we have
\begin{align*}
 \gamma \|\Theta^{*} \mathbf{f}\|^2  &\leq \sum _{l\in \Lambda_0}\sum_{k\in \Lambda,m\in \Lambda'}\|\langle  E_{Cm}T_{Bk}\Phi_l, \Theta^{*} \mathbf{f}          \rangle\|^2\\
& = \sum _{l\in \Lambda_0}\sum_{k\in \Lambda,m\in \Lambda'}\|\langle  \Theta E_{Cm}T_{Bk}\Phi_l, \mathbf{f}\rangle\|^2\\
 &= \sum _{l\in \Lambda_0}\sum_{k\in \Lambda,m\in \Lambda'}\|\langle E_{Cm}T_{Bk}\Phi_l, \Theta^{*} \mathbf{f} \rangle\|^2\\
 & \leq \delta \|\Theta^{*} \mathbf{f}\|^2\\
 & \leq  \delta \|\Theta \mathbf{f}\|^2.
\end{align*}
Thus, $\{\Theta E_{Cm}T_{Bk}\Phi_l\}_{l\in \Lambda_0, k\in \Lambda, m\in \Lambda'}$ is a matrix-valued $(\Theta, \Theta^*)$-frame for the space $L^2( G, \mathbb{C}^{n\times n})$ with the desired frame bounds.
\end{proof}
Now, we have enough knowledge to discuss the conditions on an operator $\Theta$ acting on $L^2( G, \mathbb{C}^{n\times n})$ such that the existence of $\lambda_{o}$-$(\Theta, \Theta^*)$-tight frames for $L^2(G, \mathbb{C}^{n\times n})$ is guaranteed. We give the following result regarding this.
\begin{thm}\label{thm2}
Let  $\Theta$ be a hyponormal operator on $L^2(G, \mathbb{C}^{n\times n})$. If $\Theta$ is  adjointable with respect to the matrix-valued inner product, then there exists  a matrix-valued $\lambda_{o}$-$(\Theta, \Theta^*)$-tight frame for $L^2(G, \mathbb{C}^{n\times n})$ for every positive real number $\lambda_{o}$.
\end{thm}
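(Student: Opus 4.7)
My approach to Theorem~\ref{thm2} is to reduce it to Proposition~\ref{1a}: if I can exhibit a $\lambda_o$-tight matrix-valued Gabor frame in $L^2(G,\mathbb{C}^{n\times n})$, then Proposition~\ref{1a} (with $\gamma=\delta=\lambda_o$) automatically transports it into a matrix-valued $\lambda_o$-$(\Theta,\Theta^{*})$-tight frame of the form $\{\Theta E_{Cm}T_{Bk}\Phi_l\}$, because the proposition preserves the common value of the two frame bounds whenever $\Theta$ is adjointable and hyponormal. Thus the substantive task is the construction of the $\lambda_o$-tight matrix-valued Gabor frame, after which the theorem follows from a single invocation.

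To manufacture the tight frame I would begin with a $\mu$-tight scalar Gabor frame $\{E_{Cm}T_{Bk}\phi\}_{k\in\Lambda,m\in\Lambda'}$ on $L^2(G)$, whose existence for the given lattice data is standard on a second countable LCA group (and is illustrated by Example~\ref{exb1}). For each pair $(i,j)\in\{1,\dots,n\}^{2}$ I would let $\Phi_{(i,j)}\in L^2(G,\mathbb{C}^{n\times n})$ denote the matrix-valued window whose only nonzero entry is $\phi$ at position $(i,j)$. Using the explicit formulas for the matrix-valued inner product and the Frobenius norm on $L^2(G,\mathbb{C}^{n\times n})$, a direct bookkeeping computation---multiplying the matrices, evaluating the resulting $(p,r)$-entry, and summing the squared moduli---yields
\begin{align*}
\sum_{(i,j)}\sum_{k\in\Lambda,\,m\in\Lambda'}\bigl\|\langle E_{Cm}T_{Bk}\Phi_{(i,j)},\mathbf{f}\rangle\bigr\|^{2} = n\mu\,\|\mathbf{f}\|^{2} \qquad (\mathbf{f}\in L^2(G,\mathbb{C}^{n\times n})).
\end{align*}
Replacing $\phi$ by $\sqrt{\lambda_o/(n\mu)}\,\phi$ turns this family into a genuine $\lambda_o$-tight matrix-valued Gabor frame for $L^2(G,\mathbb{C}^{n\times n})$.

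With this $\lambda_o$-tight frame in hand, Proposition~\ref{1a} applied to the hyponormal adjointable operator $\Theta$ delivers
\begin{align*}
\lambda_o\|\Theta^{*}\mathbf{f}\|^{2}\leq \sum_{(i,j),\,k,\,m}\bigl\|\langle\Theta E_{Cm}T_{Bk}\Phi_{(i,j)},\mathbf{f}\rangle\bigr\|^{2}\leq \lambda_o\|\Theta\mathbf{f}\|^{2}
\end{align*}
for every $\mathbf{f}\in L^2(G,\mathbb{C}^{n\times n})$, which is exactly the $\lambda_o$-$(\Theta,\Theta^{*})$-tight condition of Definition~\ref{def3.2x}. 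Hence $\{\Theta E_{Cm}T_{Bk}\Phi_{(i,j)}\}$ is the required frame.

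The main obstacle is the entry-wise embedding step: because the matrix-valued inner product mixes rows and columns asymmetrically, the scalar Parseval identity for $\phi$ does not transfer verbatim to $L^2(G,\mathbb{C}^{n\times n})$, and one has to track the factor $n$ produced by the $n$ possible row positions of $\phi$ in order to land on a genuinely tight bound. Once that combinatorial bookkeeping is handled, adjointability of $\Theta$ cleanly bridges the Frobenius-norm frame inequality to the $\|\Theta\mathbf{f}\|$ and $\|\Theta^{*}\mathbf{f}\|$ expressions required by Definition~\ref{def3.2x}; without adjointability this bridge collapses, as the remark following Proposition~\ref{prop1} already shows.
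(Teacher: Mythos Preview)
Your proposal is correct and follows essentially the same strategy as the paper: construct a $\lambda_o$-tight matrix-valued Gabor frame by embedding a scalar tight Gabor frame into $L^2(G,\mathbb{C}^{n\times n})$, then invoke Proposition~\ref{1a} to transport the tight bounds to the $(\Theta,\Theta^*)$ setting. The only cosmetic difference is in the embedding: the paper places $\sqrt{\lambda_o}\,\phi_l$ on the diagonal of a single matrix window per $l$, which gives the $\lambda_o$-tight constant directly without any extra factor, whereas you use $n^2$ single-entry windows $\Phi_{(i,j)}$ and then rescale away the resulting factor $n$; both computations are straightforward and lead to the same conclusion.
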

\begin{proof}
Let  $\{E_{Cm}T_{Bk} \phi_l\}_{l\in \Lambda_0, k\in \Lambda, m\in \Lambda'}$ be a Parseval frame for $L^2(G)$. For each $l\in \Lambda_0$, define the matrix-valued function  $ \Phi_l \in L^2( G, \mathbb{C}^{n\times n})$ as
\begin{align*}
\Phi_l=\begin{bmatrix}
\sqrt{\lambda_{o}} \ \phi_l & 0 & \cdots & 0\\
0 & \sqrt{\lambda_{o}} \ \phi_l & \cdots & 0\\
\vdots & \vdots & \ddots & \vdots\\
0 & 0 & \cdots & \sqrt{\lambda_{o}} \ \phi_l
\end{bmatrix}.
\end{align*}
Then
\begin{align*}
 E_{Cm}T_{Bk}\Phi_l = \tiny{\begin{bmatrix}
E_{Cm}T_{Bk}(\sqrt{\lambda_{o}} \ \phi_l) & 0 & \cdots & 0\\
0 & E_{Cm}T_{Bk}(\sqrt{\lambda_{o}} \ \phi_l) & \cdots & 0\\
\vdots & \vdots & \ddots & \vdots\\
0 & 0 & \cdots & E_{Cm}T_{Bk}(\sqrt{\lambda_{o}} \ \phi_l)
\end{bmatrix}}.
\end{align*}
Therefore, for any $ \mathbf{f} = \begin{bmatrix}
f_{11} & f_{12} & \cdots & f_{1n}\\
f_{21} & f_{22} & \cdots & f_{2n}\\
\vdots & \vdots & \ddots & \vdots\\
f_{n1} & f_{n2} & \cdots & f_{nn}
\end{bmatrix} \in L^2( G, \mathbb{C}^{n\times n})$, we have

\begin{align*}
&\sum _{l\in \Lambda_0}\sum_{k\in \Lambda,m\in \Lambda'}\|\langle  \mathbf{f},  E_{Cm}T_{Bk}\Phi_l \rangle\|^2\\
& =\sum _{l\in \Lambda_0}\sum_{k\in \Lambda \atop m\in \Lambda'} \Big\| \begin{bmatrix}
\langle f_{11},E_{Cm}T_{Bk}(\sqrt{\lambda_{o}} \ \phi_l)\rangle &  \cdots & \langle f_{1n}, E_{Cm}T_{Bk}(\sqrt{\lambda_{o}} \ \phi_l)\rangle\\
\langle f_{21}, E_{Cm}T_{Bk}(\sqrt{\lambda_{o}} \ \phi_l) \rangle  &  \cdots & \langle f_{2n}, E_{Cm}T_{Bk}(\sqrt{\lambda_{o}} \ \phi_l) \rangle \\
\vdots & \ddots & \vdots \\
\langle f_{n1}, E_{Cm}T_{Bk}(\sqrt{\lambda_{o}} \ \phi_l) \rangle  &  \cdots & \langle f_{nn}, E_{Cm}T_{Bk}(\sqrt{\lambda_{o}} \ \phi_l) \rangle
\end{bmatrix} \Big\|^2\\
&= \lambda_{o}\sum _{l\in \Lambda_0}\sum_{k\in \Lambda, m\in \Lambda'} \sum_{1\leq i,j\leq n}| \langle f_{ij},E_{Cm}T_{Bk}\ \phi_l\rangle|^2\\
&= \lambda_{o}\sum_{1\leq i,j\leq n}\| f_{ij}\|^2\\
&=\lambda_{o} \|\mathbf{f}\|^2.
\end{align*}
Hence, $\{E_{Cm}T_{Bk}\Phi_l\}_{l\in \Lambda_0,k\in \Lambda,m\in \Lambda'}$ is a matrix-valued $\lambda_{o}$-tight  Gabor frame for $L^2( G, \mathbb{C}^{n\times n})$. Further, by Proposition \ref{1a}, $\{\Theta E_{Cm}T_{Bk}\Phi_l\}_{l\in \Lambda_0,  k\in \Lambda,  m \in \Lambda'}$ is a $(\Theta, \Theta^*)$-frame for $L^2( G, \mathbb{C}^{n\times n})$ with $\lambda_{o}$ as lower and upper frame bounds.
Hence, the existence of a matrix-valued  $\lambda_{o}$-$(\Theta, \Theta^*)$-tight frame for the space $L^2(G, \mathbb{C}^{n\times n})$ is proved.
\end{proof}
We illustrate Theorem \ref{thm2} by giving the following example regarding the existence of $\lambda_{o}$-$(\Theta, \Theta^*)$-tight frames for $L^2(\mathbb{R}, \mathbb{C}^{3\times 3})$.
\begin{ex}
Let $G=\mathbb{R}$ be the additive group of real numbers. The characters on $\mathbb{R}$ are the functions $\eta_y:\mathbb{R}\rightarrow \mathbb{C}$ defined by
\begin{align*}
\eta_y(x)=\textit{e}^{2\pi \textit{i} y x},\ x \in \mathbb{R}
\end{align*}
 for fixed $y \in \mathbb{R}$. That is, the  dual group  $\widehat{G}$ can be identified with $\mathbb{R}$, see \cite{Foll} for technical details. Consider the lattice $\Lambda=\mathbb{Z}$ and $\Lambda'=\mathbb{Z}$.
Then, for $\phi= \chi_{[0,1]}$, the Gabor system $\{E_m T_k\phi\}_{m, k \in \mathbb{Z}}$ is an orthonormal basis for $L^2(\mathbb{R})$, see  \cite[p. 96]{OC2} for details.

Define a matrix-valued function  $ \Phi \in L^2( \mathbb{R}, \mathbb{C}^{3\times 3})$ as
\begin{align*}
\Phi=\begin{bmatrix}
\sqrt{3} \ \phi & 0 & 0\\
0 & \sqrt{3} \ \phi  & 0\\
0 & 0 & \sqrt{3} \ \phi
\end{bmatrix}.
\end{align*}
Then, for any $ \mathbf{f} = \big[f_{i,j}\big]_{1 \leq i, j \leq n} \in L^2( \mathbb{R}, \mathbb{C}^{3\times 3})$, we have
\begin{align*}
\sum_{m, k \in \mathbb{Z}}\|\langle  \mathbf{f}, E_m T_k\Phi \rangle\|^2 =3 \|\mathbf{f}\|^2,
\end{align*}
which implies that $\{E_m T_k\Phi\}_{m, k \in \mathbb{Z}}$ is a matrix-valued $3$-tight Gabor frame for $L^2( \mathbb{R}, \mathbb{C}^{3\times 3})$.
Define $\Theta: L^2(\mathbb{R},\mathbb{C}^{3\times 3}) \rightarrow L^2(\mathbb{R},\mathbb{C}^{3\times 3})$ by
\begin{align*}
\Theta \colon \mathbf{f} \mapsto \begin{bmatrix}
f_{11} & 0& f_{13} \\
f_{21}& 0& f_{23}\\
f_{31}& 0& f_{33}
\end{bmatrix}, \ \mathbf{f} = \begin{bmatrix}
f_{11} & f_{12}& f_{13} \\
f_{21}& f_{22}& f_{23}\\
f_{31}& f_{32}& f_{33}
 \end{bmatrix} \in L^2(\mathbb{R}, \mathbb{C}^{3\times 3}).
 \end{align*}
Then, $\Theta$ is a bounded linear operator with adjoint $\Theta^*=\Theta$. Also, $\Theta$ is adjointable with respect to matrix-valued inner product on $L^2(\mathbb{R}, \mathbb{C}^{3\times 3})$. That is, $\langle \Theta \mathbf{f},\mathbf{g}\rangle = \langle \mathbf{f}, \Theta^*\mathbf{g}\rangle, \ \mathbf{f},\mathbf{g} \in L^2(\mathbb{R}, \mathbb{C}^{3\times 3})$.
Then, by Proposition \ref{1a}, the matrix-valued system $\{\Theta E_m T_k\Phi\}_{m, k \in \mathbb{Z}}$ is a matrix-valued 3-$(\Theta, \Theta^*)$-tight frame for $L^2( \mathbb{R}, \mathbb{C}^{3\times 3})$.
\end{ex}

\begin{rem}\label{rm7}
In Theorem \ref{thm2}, the condition of adjointability of $\Theta$ with respect to matrix-valued inner product is not a necessary condition.
\end{rem}

Next, we discuss frame properties of the image of a $(\Theta, \Theta^*)$-Gabor frame in $L^2(G, \mathbb{C}^{n\times n})$ under a bounded linear operator $\Xi$. It is proved in  \cite[Proposition 4.2]{Jind} that the image of a $\Theta$-Gabor frame for $L^2( G, \mathbb{C}^{n\times n})$ under an operator $\Xi \in \mathcal{B}(L^2( G, \mathbb{C}^{n\times n}))$ becomes a $\Xi \Theta$-frame for $L^2( G, \mathbb{C}^{n\times n})$ provided $\Xi$ is  adjointable with respect to matrix-valued inner product. But, this is not true for the case of $(\Theta, \Theta^*)$-Gabor frames in $L^2( G, \mathbb{C}^{n\times n})$. That is, if $\mathcal{G}(C, B, \Phi_{\Lambda_0})$ is a  $(\Theta, \Theta^*)$-Gabor frame for $L^2( G, \mathbb{C}^{n\times n})$ and $\Xi \in \mathcal{B}(L^2( G, \mathbb{C}^{n\times n}))$ is  adjointable with respect to matrix-valued inner product, then $\Xi(\mathcal{G}(C, B, \Phi_{\Lambda_0}))$ may not be a $(\Xi \Theta, (\Xi \Theta)^*)$-frame for $L^2( G, \mathbb{C}^{n\times n})$. This is justified in the following example.
\begin{ex}\label{ex2}
 Consider tight Gabor frames $\{E_{8m}T_k\phi_1\}_{k\in \Lambda \atop  m \in \mathbb{Z}}$ and  $\{E_{8m}T_k\phi_2\}_{k\in \Lambda \atop  m \in \mathbb{Z}}$  for $L^2(G)$ given in Example \ref{exb1}. Define
$\Theta: L^2(G,\mathbb{C}^{2\times 2}) \rightarrow L^2(G,\mathbb{C}^{2\times 2})$ by
\begin{align*}
\Theta \colon \mathbf{f} \mapsto \begin{bmatrix}
  f_{22} &  f_{21} \\
   f_{12} &  f_{11}
  \end{bmatrix}, \ \mathbf{f} = \begin{bmatrix}
f_{11} & f_{12} \\
f_{21}& f_{22}
 \end{bmatrix} \in L^2(G, \mathbb{C}^{2\times 2}).
 \end{align*}
Then, $\{E_{8m}T_{k}\Phi_l\}_{l\in \{1,2\},k\in \Lambda,m\in \mathbb{Z}}$ is a matrix-valued $10$-tight $(\Theta, \Theta^*)$-frame for $L^2(G, \mathbb{C}^{2\times 2})$. In fact, for any $\mathbf{f} \in L^2(G, \mathbb{C}^{2\times 2})$, we have
\begin{align*}
10\|\Theta^*\mathbf{f}\|^2&=10\|\mathbf{f}\|^2
\leq  \sum_{l\in\{1,2\}} \sum\limits_{k\in \Lambda,m\in\mathbb{Z}} \|\langle \mathbf{f},E_{8m}T_{k}\Phi_l\rangle\|^2&\leq 10 \|\mathbf{f}\|^2
= 10 \|\Theta\mathbf{f}\|^2.
\end{align*}

 Define
$\Xi: L^2(G,\mathbb{C}^{2\times 2}) \rightarrow L^2(G,\mathbb{C}^{2\times 2})$ by
\begin{align*}
\Xi \colon \mathbf{f} \mapsto \begin{bmatrix}
  0&  f_{12} \\
  0 &  f_{22}
  \end{bmatrix}, \ \mathbf{f} = \begin{bmatrix}
f_{11} & f_{12} \\
f_{21}& f_{22}
 \end{bmatrix} \in L^2(G, \mathbb{C}^{2\times 2}).
 \end{align*}
Then, $\Xi$ is a bounded linear operator with adjoint $\Xi^*=\Xi$. Also, $\Xi$ is  adjointable with respect to matrix-valued inner product on $L^2(G, \mathbb{C}^{2\times 2})$. That is, $\langle \Xi \mathbf{f},\mathbf{g}\rangle = \langle \mathbf{f}, \Xi^*\mathbf{g}\rangle, \ \mathbf{f},\mathbf{g} \in L^2(G, \mathbb{C}^{2\times 2})$. However, $\{\Xi E_{8m}T_{k}\Phi_l\}_{l\in \{1,2\},k\in \Lambda \atop m\in \mathbb{Z}}$ is not a  $(\Xi \Theta, (\Xi\Theta)^*)$-frame. If possible, let $\{\Xi E_{8m}T_{k}\Phi_l\}_{l\in \{1,2\},k\in \Lambda,m\in \mathbb{Z}}$ be a  $(\Xi \Theta, (\Xi\Theta)^*)$-frame with bounds $\gamma, \delta$. Then, for  $\mathbf{f}_o=\begin{bmatrix}
0 & f \\
0 & f \\
\end{bmatrix}$, where  $f$ is a non-zero function in $L^2(G)$, we have
\begin{align*}
\sum_{l\in\{1,2\}}\sum_{k\in \Lambda,  m\in\mathbb{Z}}\Big\|\Big\langle \Xi E_{8m}T_{k}\Phi_l,\mathbf{f}_o \Big\rangle \Big\|^2= 20\|f\|^2 >0 = \delta \|\Xi\Theta \mathbf{f}_o \|^2,
\end{align*}
which is a contradiction.
\end{ex}
In the following result, we give some additional conditions on $\Xi$ so that $\Xi(\mathcal{G}(C, B, \Phi_{\Lambda_0}))$ becomes a $(\Xi \Theta, (\Xi \Theta)^*)$-frame. This result can be seen as a generalization of Proposition \ref{1a}.
\begin{prop}\label{2a}
Let $\{E_{Cm}T_{Bk}\Phi_l\}_{l\in \Lambda_0, k\in \Lambda, m\in \Lambda'}$ be  a matrix-valued  $(\Theta, \Theta^*)$-Gabor frame for $L^2( G, \mathbb{C}^{n\times n})$ with frame bounds $\gamma$ and $\delta$. Suppose
\begin{enumerate}[$(i)$]
  \item $\Xi \in \mathcal{B}(L^2( G, \mathbb{C}^{n\times n}))$ is  adjointable with respect to matrix-valued inner product.
  \item $\Xi$ is  hyponormal on Ran$(\Theta)$ such that $\Theta \Xi^*= \Xi^* \Theta$.
\end{enumerate}
 Then, $\{\Xi E_{Cm}T_{Bk}\Phi_l\}_{l\in \Lambda_0, k\in \Lambda, m\in \Lambda'}$is a $(\Xi \Theta, (\Xi \Theta)^*)$-frame for $L^2( G, \mathbb{C}^{n\times n})$ with the same frame bounds.
\end{prop}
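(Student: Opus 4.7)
My plan is to reduce the claim to the hypothesized $(\Theta,\Theta^{*})$-Gabor frame inequality by substituting $\Xi^{*}\mathbf{f}$ for $\mathbf{f}$ and then transporting $\Xi^{*}$ across the matrix-valued inner product using hypothesis $(i)$. Concretely, applying the frame inequality of $\{E_{Cm}T_{Bk}\Phi_{l}\}$ to $\Xi^{*}\mathbf{f}$ produces
\begin{align*}
\gamma\,\|\Theta^{*}\Xi^{*}\mathbf{f}\|^{2}\leq\sum_{l,k,m}\bigl\|\langle E_{Cm}T_{Bk}\Phi_{l},\Xi^{*}\mathbf{f}\rangle\bigr\|^{2}\leq\delta\,\|\Theta\Xi^{*}\mathbf{f}\|^{2},
\end{align*}
and matrix-valued adjointability converts each inner product $\langle E_{Cm}T_{Bk}\Phi_{l},\Xi^{*}\mathbf{f}\rangle$ into $\langle\Xi E_{Cm}T_{Bk}\Phi_{l},\mathbf{f}\rangle$, yielding on the middle term precisely the sum that must be bounded.

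For the lower bound I would observe that since $\Xi$ and $\Theta$ are both adjointable, $(\Xi\Theta)^{*}=\Theta^{*}\Xi^{*}$, so the left-hand side already reads $\gamma\,\|(\Xi\Theta)^{*}\mathbf{f}\|^{2}$. For the upper bound I would invoke hypothesis $(ii)$ in two steps: the commutation $\Theta\Xi^{*}=\Xi^{*}\Theta$ rewrites $\|\Theta\Xi^{*}\mathbf{f}\|$ as $\|\Xi^{*}(\Theta\mathbf{f})\|$, and since $\Theta\mathbf{f}\in\operatorname{Ran}(\Theta)$, the hyponormality of $\Xi$ on $\operatorname{Ran}(\Theta)$ yields $\|\Xi^{*}(\Theta\mathbf{f})\|\leq\|\Xi(\Theta\mathbf{f})\|=\|\Xi\Theta\mathbf{f}\|$. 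Stringing these inequalities together gives the required two-sided bound with the same constants $\gamma$ and $\delta$.

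The whole argument is essentially a change of variable $\mathbf{f}\mapsto\Xi^{*}\mathbf{f}$ together with the adjoint-reversal identity $(\Xi\Theta)^{*}=\Theta^{*}\Xi^{*}$; hypothesis $(ii)$ is tailored so that the apparently \emph{wrong} upper term $\|\Theta\Xi^{*}\mathbf{f}\|$ emerging from the substitution can be converted to the \emph{right} term $\|\Xi\Theta\mathbf{f}\|$ that the definition of a $(\Xi\Theta,(\Xi\Theta)^{*})$-frame demands. I do not anticipate a serious obstacle; the only bookkeeping to be careful about is to ensure that adjointability with respect to the matrix-valued inner product propagates correctly to the Frobenius norms appearing in the frame inequality, which is automatic since the Frobenius norm is the trace of the matrix-valued inner product.
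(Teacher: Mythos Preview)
Your proposal is correct and follows essentially the same argument as the paper: substitute $\Xi^{*}\mathbf{f}$ into the given $(\Theta,\Theta^{*})$-frame inequality, use matrix-valued adjointability of $\Xi$ to rewrite the middle sum, identify the lower bound as $\gamma\|(\Xi\Theta)^{*}\mathbf{f}\|^{2}$ directly, and handle the upper bound via the commutation $\Theta\Xi^{*}=\Xi^{*}\Theta$ followed by hyponormality of $\Xi$ on $\operatorname{Ran}(\Theta)$. The only cosmetic difference is that the paper starts from $\sum\|\langle\Xi E_{Cm}T_{Bk}\Phi_{l},\mathbf{f}\rangle\|^{2}$ and moves $\Xi$ inside first, whereas you start from the frame inequality at $\Xi^{*}\mathbf{f}$ and move $\Xi$ outside afterwards; the chain of inequalities is identical.
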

\proof
For any $\mathbf{f}\in L^2(G,\mathbb{C}^{n\times n})$, we have
\begin{align}\label{e3}
\sum_{l\in \Lambda_0} \sum_{k\in \Lambda,m\in \Lambda'}\Big\|\Big\langle \Xi E_{Cm}T_{Bk}\Phi_l,\mathbf{f} \Big\rangle \Big\|^2 &= \sum_{l\in \Lambda_0} \sum_{k\in \Lambda, m\in \Lambda'}\Big\|\Big\langle  E_{Cm}T_{Bk}\Phi_l, \Xi^*\mathbf{f} \Big\rangle \Big\|^2\notag\\
 & \leq  \delta\|\Theta \Xi^*\mathbf{f}\|^2\notag\\
& =  \delta\| \Xi^* \Theta\mathbf{f}\|^2\notag\\
& \leq \delta\| \Xi \Theta\mathbf{f}\|^2.
\end{align}
Similarly
\begin{align}\label{e4}
\sum_{l\in \Lambda_0} \sum_{k\in \Lambda,m\in \Lambda'}\Big\|\Big\langle \Xi E_{Cm}T_{Bk}\Phi_l,\mathbf{f} \Big\rangle \Big\|^2 &= \sum_{l\in \Lambda_0} \sum_{k\in \Lambda, m\in \Lambda'}\Big\|\Big\langle  E_{Cm}T_{Bk}\Phi_l, \Xi^*\mathbf{f} \Big\rangle \Big\|^2\notag\\
 & \geq \gamma \|\Theta^* \Xi^*\mathbf{f}\|^2\notag\\
 &= \gamma \|(\Xi \Theta)^*\mathbf{f}\|^2 \ \mathbf{f} \in L^2(G,\mathbb{C}^{n\times n}).
\end{align}
By  $(\ref{e3})$ and $(\ref{e4})$, we conclude that $\{\Xi E_{Cm}T_{Bk}\Phi_l\}_{l\in \Lambda_0, k\in \Lambda, m\in \Lambda'}$ is a matrix-valued $(\Xi \Theta, (\Xi \Theta)^*)$-frame for $L^2( G, \mathbb{C}^{n\times n})$ with frame bounds $\gamma$ and $\delta$. This completes the proof.
\endproof
\begin{rem}
The condition that the operator $\Theta$ commutes  with  $\Xi^*$ in Theorem \ref{2a} cannot be relaxed. Consider the operators $\Theta$, $\Xi$ defined on $L^2(G, \mathbb{C}^{2\times 2})$ and the system  $\{E_{8m}T_{k}\Phi_l\}_{l\in \{1,2\},k\in \Lambda,m\in \mathbb{Z}}$ which is a $(\Theta, \Theta^*)$-Gabor frame  for $L^2(G, \mathbb{C}^{2\times 2})$ given in Example \ref{ex2}. As mentioned in Example \ref{ex2}, the operator $\Xi$ is adjointable with respect to matrix-valued inner product, and $\Xi$ is  hyponormal on Ran$(\Theta)$ since $\Xi^*=\Xi$. But, $\Theta \Xi^* \ne \Xi^* \Theta$. In fact, for any $\mathbf{f} = \begin{bmatrix}
f_{11} & f_{12} \\
f_{21}& f_{22}
 \end{bmatrix} \in L^2(G, \mathbb{C}^{2\times 2})$, we have
\begin{align*}
\Theta \Xi^* \mathbf{f}=\Theta \Xi \mathbf{f} = \begin{bmatrix}
   f_{22}& 0  \\
  f_{12}& 0
  \end{bmatrix} \quad
\text{and} \quad
\Xi^* \Theta \mathbf{f}=\Xi \Theta  \mathbf{f} = \begin{bmatrix}
0&   f_{21}  \\
 0& f_{11}
  \end{bmatrix}.
\end{align*}
Therefore, the system $\{\Xi E_{8m}T_{k}\Phi_l\}_{l\in \{1,2\},k\in \Lambda,m\in \mathbb{Z}}$  not being a  $(\Xi \Theta, (\Xi\Theta)^*)$-frame, details in Example \ref{ex2}, supports our argument.
\end{rem}

Next, we give a  characterization for   matrix-valued  $(\Theta, \Theta^*)$-Gabor frames in $L^2(G, \mathbb{C}^{n\times n})$. This is inspired by a fundamental result due to G\v{a}vruta in \cite[Theorem 4]{LG} for ordinary $K$-frames in separable Hilbert spaces. This is also related with the concept of atomic systems in Hilbert spaces. The matrix-valued atomic system in matrix-valued function spaces can be studied in terms of  $(\Theta, \Theta^*)$-Gabor frames.  In the following result, if $\mathcal{G}(C, B, \Phi_{\Lambda_0})$ is a Parseval $(\Theta, \Theta^*)$-Gabor frame of $L^2( G, \mathbb{C}^{n\times n})$, then $\Theta$ turns out to be a hyponormal operator on $L^2( G, \mathbb{C}^{n\times n})$.

\begin{thm}\label{3.5}
Let  $\Theta$ be a bounded linear operator acting on $L^2(G, \mathbb{C}^{n\times n})$. A matrix-valued Gabor system $\mathcal{G}(C, B, \Phi_{\Lambda_0})$ is a $(\Theta, \Theta^*)$-Gabor frame for the space $L^2( G, \mathbb{C}^{n\times n})$ if and only if there exists a bounded linear  operator $\Omega$ from  $\ell^2(\Lambda_0\times\Lambda\times\Lambda', \mathcal{M}_{n}(\mathbb{C}))$ into $ L^2(G, \mathbb{C}^{n\times n})$ such that
\begin{enumerate}[$(i)$]
\item $ E_{Cm} T_{Bk}\Phi_l= \Omega \chi_{l,k,m}, \ l\in\Lambda_0,k\in \Lambda,m\in\Lambda'$, where $\{ \chi_{l,k,m} \}_{l\in\Lambda_0,k\in \Lambda \atop  m \in\Lambda'} $ is an orthonormal basis of $\ell^2(\Lambda_0\times\Lambda \times \Lambda',\mathcal{M}_n(\mathbb{C}))$,\label{conI}
\item  there exist finite positive numbers $\alpha$ and $\beta$ satisfying
\begin{align*}
\alpha \ \text{tr} \langle \Theta \Theta^*\mathbf{f},\mathbf{f} \rangle \leq  \text{tr}\langle  \Omega \Omega^*\mathbf{f},\mathbf{f} \rangle \leq
\beta \ \text{tr} \langle \Theta^*\Theta \mathbf{f},\mathbf{f} \rangle, \ \mathbf{f} \in L^2(G, \mathbb{C}^{n\times n}).
\end{align*} \label{con2}
\end{enumerate}
\end{thm}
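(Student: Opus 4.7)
The plan I would pursue is to identify $\Omega$ with the synthesis operator $V$ introduced in Section \ref{SecII} and then translate the $(\Theta,\Theta^*)$-frame inequality \eqref{eq1} into the operator inequality in \eqref{con2} via the basic identities $\text{tr}\langle TT^*\mathbf{f},\mathbf{f}\rangle = \|T^*\mathbf{f}\|^2$ and $\text{tr}\langle T^*T\mathbf{f},\mathbf{f}\rangle = \|T\mathbf{f}\|^2$, which follow directly from $\langle\cdot,\cdot\rangle_o = \text{tr}\langle\cdot,\cdot\rangle$.

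For the forward direction, I would set $\Omega := V$, where $V:\{M_{l,k,m}\}\mapsto \sum_{l,k,m} M_{l,k,m}\, E_{Cm}T_{Bk}\Phi_l$. The upper $(\Theta,\Theta^*)$-frame bound dominates the Bessel sum by $\beta_o \|\Theta\|^2 \|\mathbf{f}\|^2$, so $V$ extends to a bounded operator from $\ell^2(\Lambda_0\times\Lambda\times\Lambda',\mathcal{M}_n(\mathbb{C}))$ into $L^2(G,\mathbb{C}^{n\times n})$. Taking $\chi_{l,k,m}$ to be the canonical basis element supported at $(l,k,m)$, condition \eqref{conI} is immediate. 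To establish \eqref{con2} I would identify the Hilbert adjoint $\Omega^*$ with the analysis operator $W:\mathbf{f}\mapsto\{\langle \mathbf{f}, E_{Cm}T_{Bk}\Phi_l\rangle\}_{l,k,m}$; this follows from the cyclicity of the trace when checking $\langle \Omega\{M\},\mathbf{f}\rangle_o = \langle \{M\},W\mathbf{f}\rangle_o$. Parseval's identity in the Hilbert space $\ell^2(\Lambda_0\times\Lambda\times\Lambda',\mathcal{M}_n(\mathbb{C}))$ then yields
\begin{align*}
\text{tr}\langle\Omega\Omega^*\mathbf{f},\mathbf{f}\rangle = \|\Omega^*\mathbf{f}\|^2 = \sum_{l\in\Lambda_0}\sum_{k\in\Lambda,m\in\Lambda'} \|\langle E_{Cm}T_{Bk}\Phi_l,\mathbf{f}\rangle\|^2,
\end{align*}
and substituting this together with $\text{tr}\langle\Theta\Theta^*\mathbf{f},\mathbf{f}\rangle = \|\Theta^*\mathbf{f}\|^2$ and $\text{tr}\langle\Theta^*\Theta\mathbf{f},\mathbf{f}\rangle = \|\Theta\mathbf{f}\|^2$ into \eqref{eq1} gives \eqref{con2} with $\alpha = \alpha_o$ and $\beta = \beta_o$.

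For the reverse direction, condition \eqref{conI} pins down the action of $\Omega$ on the basis, forcing $\Omega$ to coincide with the synthesis operator $V$ of the Gabor system. The displayed identity above then still holds, and combining it with the two trace identities for $\Theta$ and substituting into \eqref{con2} recovers \eqref{eq1} with frame bounds $\alpha$ and $\beta$, proving that $\mathcal{G}(C,B,\Phi_{\Lambda_0})$ is a $(\Theta,\Theta^*)$-Gabor frame.

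The main technical obstacle will be a clean, fully justified computation of $\Omega^*$ and the accompanying Parseval-type identity in the matrix-valued $\ell^2$ space. One must verify carefully that, with the correct indexing of the orthonormal basis, the scalar Hilbert space norm $\|\Omega^*\mathbf{f}\|^2$ truly reproduces the sum of squared Frobenius norms of the matrix-valued coefficients $\langle E_{Cm}T_{Bk}\Phi_l,\mathbf{f}\rangle$. This amounts to reconciling the two inner products in play throughout the paper, namely the scalar inner product $\langle\cdot,\cdot\rangle_o$ that makes $L^2(G,\mathbb{C}^{n\times n})$ a Hilbert space and the matrix-valued inner product $\langle\cdot,\cdot\rangle$ appearing in the frame sum; the cyclicity of the trace is the essential tool that glues these together. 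Once this bookkeeping is settled, both implications follow directly from the definitions.
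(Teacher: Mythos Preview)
Your proposal is correct and follows essentially the same route as the paper. The only cosmetic difference is that the paper first defines the analysis map $\Xi\mathbf{f}=\sum_{l,k,m}\langle\mathbf{f},E_{Cm}T_{Bk}\Phi_l\rangle\chi_{l,k,m}$ and then sets $\Omega=\Xi^*$, whereas you start from the synthesis operator $V$ and set $\Omega=V$; since $V=\Xi^*$ these amount to the same construction, and both proofs hinge on the identity $\|\Omega^*\mathbf{f}\|^2=\sum_{l,k,m}\|\langle\mathbf{f},E_{Cm}T_{Bk}\Phi_l\rangle\|^2$ together with the trace relations $\text{tr}\langle\Theta\Theta^*\mathbf{f},\mathbf{f}\rangle=\|\Theta^*\mathbf{f}\|^2$ and $\text{tr}\langle\Theta^*\Theta\mathbf{f},\mathbf{f}\rangle=\|\Theta\mathbf{f}\|^2$.
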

\proof
Suppose first  that $\mathcal{G}(C, B, \Phi_{\Lambda_0})$ is a matrix-valued $(\Theta, \Theta^*)$-Gabor frame for the space  $L^2(G, \mathbb{C}^{n\times n})$ with frame bounds  $a_o$, $b_o$.

Define  $\Xi:L^2(G, \mathbb{C}^{n\times n}) \rightarrow \ell^2( \Lambda_0\times\Lambda\times\Lambda',\mathcal{M}_n(\mathbb{C}))$  by
\begin{align*}
\Xi (\mathbf{f}) =  \sum_{l\in\Lambda_0} \sum_{k\in\Lambda, m\in\Lambda'}\langle  \mathbf{f}, E_{Cm} T_{Bk}\Phi_l \rangle \chi_{l,k,m}, \ \mathbf{f} \in L^2(G, \mathbb{C}^{n\times n}).
\end{align*}
Then, $\Xi$ is a bounded linear operator and $\|\Xi\| \leq \sqrt{b_o}\|\Theta\|$.

 Now, for any  $l\in\Lambda_0,k\in\Lambda,m\in\Lambda'$, we have
\begin{align*}
\text{tr} \langle  \chi_{l,k,m}, \Xi \mathbf{f} \rangle &=  \text{tr} \Big\langle  \chi_{l,k,m}, \sum_{l'\in\Lambda_0}\sum_{k'\in\Lambda, m'\in\Lambda'}\langle  \mathbf{f},  E_{Cm'} T_{Bk'}\Phi_{l'}\rangle \chi_{l',k',m'}\Big \rangle\\
&= \text{tr} \langle  \mathbf{f},  E_{Cm} T_{Bk}\Phi_l \rangle^*\\
&= \text{tr} \langle  E_{Cm} T_{Bk}\Phi_l, \mathbf{f} \rangle \ \text{for all} \ \mathbf{f} \in L^2(G, \mathbb{C}^{n\times n}).
\end{align*}
Thus,  $\Xi^* \chi_{l,k,m}= E_{Cm} T_{Bk}\Phi_l$, for all $l\in\Lambda_0$, $k\in\Lambda$ and $m\in\Lambda'$. If we take $\Omega= \Xi^*$, then we obtain  \eqref{conI}. To prove  \eqref{con2}, let
$\mathbf{f} \in L^2(G, \mathbb{C}^{n\times n})$ be arbitrary. Then,
\begin{align*}
a_o \|\Theta^*\mathbf{f}\|^2 \leq  \sum_{l\in\Lambda_0}\sum_{k\in\Lambda, m\in\Lambda'}\Big\|\langle  \mathbf{f},  E_{Cm} T_{Bk}\Phi_l \rangle \Big\|^2=\|\Xi \mathbf{f}\|^2,\\
\intertext{and}
\|\Xi \mathbf{f}\|^2=\sum_{l\in\Lambda_0}\sum_{k\in\Lambda, m\in\Lambda'}\Big\|\langle  \mathbf{f},  E_{Cm} T_{Bk}\Phi_l \rangle \Big\|^2 \leq b_o \|\Theta\mathbf{f}\|^2.
\end{align*}
This imply that
\begin{align*}
a_o \text{tr} \langle \Theta \Theta^*\mathbf{f},\mathbf{f}\rangle \leq \text{tr}  \langle \Xi^* \Xi \mathbf{f},\mathbf{f}\rangle =\text{tr} \langle \Omega\Omega^*\mathbf{f}, \mathbf{f} \rangle \leq b_o \text{tr} \langle \Theta^* \Theta\mathbf{f},\mathbf{f}\rangle.
 \end{align*}
 This gives \eqref{con2}, where  $\alpha = a_o$ and  $\beta=b_o$.

To prove the converse, assume that conditions  \eqref{conI} and \eqref{con2}  hold. Then, using condition \eqref{conI}, for any $ l\in\Lambda_0,k\in\Lambda,m\in\Lambda'$ and any $\mathbf{f} \in L^2(G, \mathbb{C}^{n\times n})$, we have
\begin{align*}
\text{tr} \langle  \chi_{l,k,m}, \Omega^*\mathbf{f} \rangle &=  \text{tr} \langle \Omega \chi_{l,k,m}, \mathbf{f} \rangle\\
&= \text{tr} \langle  E_{C_m} T_{Bk}\Phi_l ,\mathbf{f} \rangle\\
&= \text{tr} \langle  \mathbf{f},  E_{Cm} T_{Bk}\Phi_l \rangle^*\\
 &=\text{tr} \Big\langle  \chi_{l,k,m},  \sum_{l'\in\Lambda_0}\sum_{k'\in\Lambda, m'\in\Lambda'}\langle  \mathbf{f},  E_{Cm'} T_{Bk'}\Phi_{l'}\rangle \chi_{l',k',m'}\Big \rangle,
\end{align*}
which entails
\begin{align*}
\Omega^*{\mathbf{f}}=  \sum_{l\in\Lambda_0} \sum_{k\in\Lambda,m\in\Lambda'}\langle  \mathbf{f},  E_{Cm} T_{Bk}\Phi_l\rangle \chi_{l,k,m}, \ \mathbf{f} \in L^2(G, \mathbb{C}^{n\times n}).
\end{align*}
Therefore
\begin{align*}
\|\Omega^*\mathbf{f}\|^2 =  \sum_{l\in\Lambda_0}\sum_{k\in\Lambda,m\in\Lambda'}\Big\|\langle  \mathbf{f},  E_{Cm} T_{Bk}\Phi_l\rangle \Big\|^2,  \  \mathbf{f} \in L^2(G, \mathbb{C}^{n\times n}).
\end{align*}
Using condition \eqref{con2},  for all $\mathbf{f} \in L^2(G, \mathbb{C}^{n\times n})$, we have
\begin{align*}
\alpha \|\Theta^*\mathbf{f}\|^2 \leq \|\Omega^*\mathbf{f}\|^2=  \sum_{l\in\Lambda_0} \sum_{k\in\Lambda,m\in\Lambda'}\Big\|\langle  \mathbf{f},  E_{Cm} T_{Bk}\Phi_l \rangle \Big\|^2 \leq \beta \|\Theta\mathbf{f}\|^2
\end{align*}
Hence, $\mathcal{G}(C, B, \Phi_{\Lambda_0})$ is a matrix-valued $(\Theta, \Theta^*)$-Gabor frame for $L^2(G, \mathbb{C}^{n\times n})$. This completes the proof.
\endproof
\section{Perturbation of $(\Theta, \Theta^*)$-Gabor Frames}\label{SecIV}
In this section, we show that matrix-valued $(\Theta, \Theta^*)$-Gabor frames are stable under small perturbation. Perturbation theory plays a significant role in both pure mathematics and applied science, see e.g. \cite{Kato76}. For applications of perturbation theory for frames in various directions, we refer to \cite{H11}.  The following  result shows that  multivariate $(\Theta, \Theta^*)$-Gabor frames in  matrix-valued signal spaces are stable under small perturbations.
\begin{thm}\label{pert}
Let $\mathcal{G}(C, B, \Phi_{\Lambda_0})$ be  a matrix-valued  $(\Theta, \Theta^*)$-Gabor frame for $L^2( G, \mathbb{C}^{n\times n})$ with frame bounds $\gamma_o$, $\delta_o$, and let  $\{\widetilde{\Phi_l}\}_{l\in \Lambda_0} \subset L^2(G,\mathbb{C}^{n\times n})$. Assume that
\begin{enumerate}[$(i)$]
 \item $\Theta^*$ be bounded below by  $m_o$.\label{percon1}
 \item  $\lambda$, $\mu$, $\eta \geq 0$  be  such that $\frac{(1-2\lambda)\gamma_o-2\mu}{2\eta}> \frac{\|\Theta\|^2}{m_o^2}$.
 \item For all  $\mathbf{f} \in L^2(G,\mathbb{C}^{n\times n})$,
\begin{align}\label{hypo11}
\sum_{l\in \Lambda_0}\sum\limits_{k\in \Lambda, m\in \Lambda'} \|\langle \mathbf{f},E_{Cm}T_{Bk}(\Phi_l- \widetilde{\Phi_l})\rangle\|^2
 &\leq \lambda \sum_{l\in \Lambda_0}\sum\limits_{k\in \Lambda, m\in \Lambda'} \|\langle \mathbf{f}, E_{Cm}T_{Bk}\Phi_l \rangle\|^2 \notag\\
 & + \mu \|\Theta^*\mathbf{f} \|^2 + \eta \|\Theta \mathbf{f} \|^2.
\end{align}
\end{enumerate}
Then, $\mathcal{G}(C, B, \widetilde{\Phi}_{\Lambda_0})$ is a  matrix-valued  $(\Theta, \Theta^*)$-Gabor frame for $L^2(G,\mathbb{C}^{n\times n})$ with frame bounds
\begin{align*}
\left(\Big(\frac{1}{2}-\lambda\Big)\gamma_o- \mu-\frac{\eta\|\Theta\|^2}{m_o^2}\right)  \ \text{and} \  2\left(\Big(1+\lambda+\frac{\mu}{\gamma_o}\Big)\delta_o+\eta\right).
\end{align*}

\end{thm}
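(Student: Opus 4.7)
The strategy is to establish the upper and lower bounds for $\mathcal{G}(C,B,\widetilde{\Phi}_{\Lambda_0})$ separately by comparing each coefficient $\langle \mathbf{f}, E_{Cm}T_{Bk}\widetilde{\Phi_l}\rangle$ with its counterpart for the original frame, using the identity $\widetilde{\Phi_l} = \Phi_l - (\Phi_l - \widetilde{\Phi_l})$ together with the linearity of $E_{Cm}T_{Bk}$ and of the matrix-valued inner product in the second slot. The perturbation hypothesis \eqref{hypo11} will then absorb the difference terms, and the bounded-belowness of $\Theta^{*}$ in (\ref{percon1}) will be used to pass between $\|\Theta \mathbf{f}\|$ and $\|\Theta^{*}\mathbf{f}\|$.

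For the \emph{upper bound}, I would apply the parallelogram-type inequality $\|x+y\|^2 \leq 2\|x\|^2 + 2\|y\|^2$ in $\mathcal{M}_n(\mathbb{C})$ with the Frobenius norm to get
\begin{align*}
\sum_{l,k,m} \|\langle \mathbf{f},E_{Cm}T_{Bk}\widetilde{\Phi_l}\rangle\|^2
&\leq 2\sum_{l,k,m}\|\langle \mathbf{f},E_{Cm}T_{Bk}\Phi_l\rangle\|^2 + 2\sum_{l,k,m}\|\langle \mathbf{f},E_{Cm}T_{Bk}(\Phi_l-\widetilde{\Phi_l})\rangle\|^2.
\end{align*}
Then I would invoke \eqref{hypo11} on the second sum and the upper bound $\delta_o\|\Theta\mathbf{f}\|^2$ on both occurrences of $\sum\|\langle \mathbf{f},E_{Cm}T_{Bk}\Phi_l\rangle\|^2$. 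The $\mu\|\Theta^{*}\mathbf{f}\|^2$ term is then converted to a multiple of $\|\Theta\mathbf{f}\|^2$ by observing that the two-sided frame inequality for $\mathcal{G}(C,B,\Phi_{\Lambda_0})$ gives $\|\Theta^{*}\mathbf{f}\|^2 \leq \tfrac{\delta_o}{\gamma_o}\|\Theta\mathbf{f}\|^2$. Collecting constants yields precisely $2\bigl((1+\lambda+\tfrac{\mu}{\gamma_o})\delta_o + \eta\bigr)\|\Theta\mathbf{f}\|^2$.

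For the \emph{lower bound}, I would use the reversed form $\|x\|^2 \leq 2\|x-y\|^2 + 2\|y\|^2$, i.e.\ $\|x-y\|^2 \geq \tfrac{1}{2}\|x\|^2 - \|y\|^2$, applied to the same splitting, giving
\begin{align*}
\sum_{l,k,m}\|\langle \mathbf{f},E_{Cm}T_{Bk}\widetilde{\Phi_l}\rangle\|^2
&\geq \tfrac{1}{2}\sum_{l,k,m}\|\langle \mathbf{f},E_{Cm}T_{Bk}\Phi_l\rangle\|^2 - \sum_{l,k,m}\|\langle \mathbf{f},E_{Cm}T_{Bk}(\Phi_l-\widetilde{\Phi_l})\rangle\|^2.
\end{align*}
After applying \eqref{hypo11} and consolidating, the right side becomes
$\bigl(\tfrac{1}{2}-\lambda\bigr)\sum\|\langle \mathbf{f},E_{Cm}T_{Bk}\Phi_l\rangle\|^2 - \mu\|\Theta^{*}\mathbf{f}\|^2 - \eta\|\Theta\mathbf{f}\|^2$. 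Using the lower frame bound $\gamma_o\|\Theta^{*}\mathbf{f}\|^2$ on the remaining sum (which requires $\tfrac{1}{2}-\lambda>0$, a consequence of the hypothesis on $\lambda,\mu,\eta$), and then the estimate $\|\Theta\mathbf{f}\|^2\leq \tfrac{\|\Theta\|^2}{m_o^2}\|\Theta^{*}\mathbf{f}\|^2$ coming from $\|\Theta^{*}\mathbf{f}\|\geq m_o\|\mathbf{f}\|$, the whole quantity is at least $\bigl(\bigl(\tfrac{1}{2}-\lambda\bigr)\gamma_o - \mu - \tfrac{\eta\|\Theta\|^2}{m_o^2}\bigr)\|\Theta^{*}\mathbf{f}\|^2$.

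The \emph{main obstacle} is not any single estimate but the careful bookkeeping: the $\mu$-term and the $\eta$-term in \eqref{hypo11} mix norms of $\Theta\mathbf{f}$ and $\Theta^{*}\mathbf{f}$, whereas the $(\Theta,\Theta^{*})$-frame inequality requires the upper bound in terms of $\|\Theta\mathbf{f}\|^2$ and the lower bound in terms of $\|\Theta^{*}\mathbf{f}\|^2$. Crossing between these two norms is only possible because of two auxiliary facts that must be deployed in the right direction: the relation $\|\Theta^{*}\mathbf{f}\|^2 \leq (\delta_o/\gamma_o)\|\Theta\mathbf{f}\|^2$ inherited from the original frame (needed in the upper half), and the bounded-belowness $\|\Theta\mathbf{f}\|^2 \leq (\|\Theta\|^2/m_o^2)\|\Theta^{*}\mathbf{f}\|^2$ (needed in the lower half). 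The final step is to check that the explicit assumption $\frac{(1-2\lambda)\gamma_o - 2\mu}{2\eta} > \frac{\|\Theta\|^2}{m_o^2}$ is exactly what forces the lower bound constant $\bigl(\tfrac{1}{2}-\lambda\bigr)\gamma_o - \mu - \tfrac{\eta\|\Theta\|^2}{m_o^2}$ to be strictly positive, completing the verification.
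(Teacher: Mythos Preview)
Your proposal is correct and follows essentially the same route as the paper: both halves rest on the splitting $\widetilde{\Phi_l}=\Phi_l-(\Phi_l-\widetilde{\Phi_l})$ combined with the elementary inequality $\|x\pm y\|^2\leq 2\|x\|^2+2\|y\|^2$, followed by \eqref{hypo11}, the frame bounds of $\mathcal{G}(C,B,\Phi_{\Lambda_0})$, and the bounded-belowness of $\Theta^*$. The only cosmetic difference is that for the upper bound the paper first majorizes $\mu\|\Theta^*\mathbf{f}\|^2$ by $\tfrac{\mu}{\gamma_o}\sum\|\langle\mathbf{f},E_{Cm}T_{Bk}\Phi_l\rangle\|^2$ and then applies the upper bound $\delta_o$, whereas you go directly via $\|\Theta^*\mathbf{f}\|^2\leq\tfrac{\delta_o}{\gamma_o}\|\Theta\mathbf{f}\|^2$; the two computations are equivalent and yield the identical constant.
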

\proof
By hypothesis \eqref{hypo11}, for any  $\mathbf{f} \in L^2(G,\mathbb{C}^{n\times n})$, we have
\begin{align*}
&\sum_{l\in \Lambda_0} \sum\limits_{k\in \Lambda, m\in \Lambda'} \|\langle \mathbf{f}, E_{Cm}T_{Bk}\widetilde{\Phi_l} \rangle\|^2\\
&\leq  2 \sum_{l\in \Lambda_0} \sum\limits_{k\in \Lambda, m\in \Lambda'} \|\langle \mathbf{f}, E_{Cm}T_{Bk}\Phi_l- E_{Cm}T_{Bk}\widetilde{\Phi_l}\rangle\|^2 \\
& + 2\sum_{l\in \Lambda_0}\sum\limits_{k\in \Lambda, m\in \Lambda'} \|\langle \mathbf{f}, E_{Cm}T_{Bk}\Phi_l \rangle\|^2\\
&\leq (2\lambda+2) \sum_{l\in \Lambda_0} \sum\limits_{k\in \Lambda, m\in \Lambda'} \|\langle \mathbf{f}, E_{Cm}T_{Bk}\Phi_l\rangle\|^2 \\
& + 2 \mu\|\Theta^*\mathbf{f}\|^2 +2\eta\|\Theta\mathbf{f}\|^2 \\
&\leq (2\lambda+2) \sum_{l\in \Lambda_0}\sum\limits_{k\in \Lambda, m\in \Lambda'} \|\langle \mathbf{f}, E_{Cm}T_{Bk}\Phi_l\rangle\|^2 \\
& + \frac{2\mu}{\gamma_o} \sum_{l\in \Lambda_0} \sum\limits_{k\in \Lambda, m\in \Lambda'} \|\langle \mathbf{f},  E_{Cm}T_{Bk} \Phi_l\rangle\|^2 + 2\eta\|\Theta\mathbf{f}\|^2.
\end{align*}
Therefore
\begin{align}\label{per1}
\sum_{l\in \Lambda_0}\sum\limits_{k\in \Lambda,m\in \Lambda'} \|\langle \mathbf{f}, E_{Cm}T_{Bk}\widetilde{\Phi_l} \rangle\|^2
&\leq 2\left(\Big(1+\lambda+\frac{\mu}{\gamma_o}\Big)\delta_o+\eta\right) \|\Theta \mathbf{f}\|^2,
\end{align}
for all $\mathbf{f} \in L^2(G,\mathbb{C}^{n\times n})$.

Similarly,
\begin{align*}
&\sum_{l\in \Lambda_0} \sum\limits_{k\in \Lambda, m\in \Lambda'} \|\langle \mathbf{f}, E_{Cm}T_{Bk}\Phi_l \rangle\|^2\\
&\leq  2 \sum_{l\in \Lambda_0} \sum\limits_{k\in \Lambda, m\in \Lambda'} \|\langle \mathbf{f}, E_{Cm}T_{Bk}\Phi_l- E_{Cm}T_{Bk}\widetilde{\Phi_l}\rangle\|^2 \\
&+ 2\sum_{l\in \Lambda_0}\sum\limits_{k\in \Lambda, m\in \Lambda'} \|\langle \mathbf{f}, E_{Cm}T_{Bk}\widetilde{\Phi_l} \rangle\|^2\\
&\leq 2\lambda \sum_{l\in \Lambda_0} \sum\limits_{k\in \Lambda, m\in \Lambda'} \|\langle \mathbf{f}, E_{Cm}T_{Bk}\Phi_l\rangle\|^2 +
+2\mu\|\Theta^*\mathbf{f}\|^2 +2\eta\|\Theta\mathbf{f}\|^2 \\
& + 2\sum_{l\in \Lambda_0}\sum\limits_{k\in \Lambda, m\in \Lambda'} \|\langle \mathbf{f}, E_{Cm}T_{Bk}\widetilde{\Phi_l} \rangle\|^2,
\end{align*}
 which entails
\begin{align}\label{per2}
 & 2\sum_{l\in \Lambda_0}\sum\limits_{k\in \Lambda, m\in \Lambda'} \|\langle \mathbf{f}, E_{Cm}T_{Bk}\widetilde{\Phi_l} \rangle\|^2 \notag\\
 &\geq (1-2\lambda) \sum_{l\in \Lambda_0} \sum\limits_{k\in \Lambda, m\in \Lambda'} \|\langle \mathbf{f}, E_{Cm}T_{Bk}\Phi_l\rangle\|^2 -2\mu\|\Theta^*\mathbf{f}\|^2 -2\eta\|\Theta\mathbf{f}\|^2\notag\\
 &\geq(1-2\lambda)\gamma_o\|\Theta^*\mathbf{f}\|^2-2\mu\|\Theta^*\mathbf{f}\|^2 -2\eta\|\Theta\|^2\|\mathbf{f}\|^2\notag\\
&\geq(1-2\lambda)\gamma_o\|\Theta^*\mathbf{f}\|^2-2\mu\|\Theta^*\mathbf{f}\|^2 -\frac{2\eta\|\Theta\|^2}{m_o^2}\|\Theta^*\mathbf{f}\|^2 \quad \big(\text{using hypothesis} \  \eqref{percon1}\big)\notag\\
\intertext{That is}
& \sum_{l\in \Lambda_0}\sum\limits_{k\in \Lambda, m\in \Lambda'} \|\langle \mathbf{f}, E_{Cm}T_{Bk}\widetilde{\Phi_l} \rangle\|^2 \geq
\left(\Big(\frac{1}{2}-\lambda\Big)\gamma_o- \mu-\frac{\eta\|\Theta\|^2}{m_o^2}\right)\|\Theta^*\mathbf{f}\|^2
\end{align}
for all $ \mathbf{f} \in L^2(G,\mathbb{C}^{n\times n})$. From \eqref{per1} and \eqref{per2}, we conclude that
 $\mathcal{G}(C, B, \widetilde{\Phi}_{\Lambda_0})$ is a frame for $L^2(G,\mathbb{C}^{n\times n})$ with the desired frame bounds.
\endproof
Next is an applicative example of Theorem \ref{pert}.
\begin{ex}\label{pertexa}
Let $\{E_{8m}T_{k}\Phi_l\}_{l\in \{1,2\},k\in \Lambda,m\in \mathbb{Z}}$ be the $10$-tight matrix-valued Gabor  frame for $L^2(G,\mathbb{C}^{2\times 2})$ given in Remark \ref{exper1}. Define $\Theta$ on  $L^2(G, \mathbb{C}^{2\times 2})$ by
\begin{align*}
\Theta: \mathbf{f}
\mapsto \begin{bmatrix}
2f_{22} & f_{21}\\
f_{12} & f_{11}
\end{bmatrix}, \ \mathbf{f}= \begin{bmatrix}
f_{11} & f_{12} \\
f_{21} & f_{22}
\end{bmatrix} \in L^2(G, \mathbb{C}^{2\times 2}).
\end{align*}
Then, $\Theta$ is a bounded linear operator satisfying $\|\Theta \mathbf{f}\|\leq 2 \|\mathbf{f}\|$, for all $\mathbf{f} \in L^2(G, \mathbb{C}^{2\times 2})$. In fact, we have $\|\Theta\|=2$. For any $\mathbf{f}$, $\mathbf{g}\in L^2(G,\mathbb{C}^{2\times 2})$, we have
\begin{align*}
\text{tr}\langle \Theta\mathbf{f},\mathbf{g}\rangle  &=\text{tr} \int_{G} \begin{bmatrix}
2f_{22} & f_{21}\\
f_{12} & f_{11}
\end{bmatrix}\begin{bmatrix}
\overline{g_{11}} & \overline{g_{21}}\\
\overline{g_{12}} & \overline{g_{22}}
\end{bmatrix}d\mu_{G}\\
&= \text{tr} \int_{G} \begin{bmatrix}
f_{11} & f_{12} \\
f_{21}  & f_{22}
\end{bmatrix}\begin{bmatrix}
\overline{g_{22}}&\overline{g_{12}}\\
\overline{g_{21}} & 2\ \overline{g_{11}}
\end{bmatrix}d\mu_{G},
\end{align*}
which implies that $\Theta^*$ is given by
\begin{align*}
\Theta^*: \mathbf{g}
\mapsto \begin{bmatrix}
g_{22} & g_{21}\\
g_{12} & 2g_{11}
\end{bmatrix}, \ \mathbf{g}= \begin{bmatrix}
g_{11} & g_{12} \\
g_{21} & g_{22}
\end{bmatrix} \in L^2(G, \mathbb{C}^{2\times 2}).
\end{align*}
It can be easily seen that $\Theta^*$ satisfies $ \|\mathbf{f}\|\leq \|\Theta^* \mathbf{f}\|\leq 2 \|\mathbf{f}\|$, for all $\mathbf{f} \in L^2(G, \mathbb{C}^{2\times 2})$. That is, $\Theta^*$ is bounded below by $m_o=1$.

 Now,   for any $\mathbf{f}\in L^2(G,\mathbb{C}^{2\times 2})$, we have
\begin{align*}
\frac{5}{2}\|\Theta^* \mathbf{f}\|^2 \leq \sum_{l\in\{1,2\}}\sum_{k\in \Lambda, m\in\mathbb{Z}}\Big\|\Big\langle E_{8m}T_{k}\Phi_l,\mathbf{f} \Big\rangle \Big\|^2 = 10 \|\mathbf{f}\|^2 \leq 10 \|\Theta\mathbf{f}\|^2.
\end{align*}
Therefore, $\{E_{8m}T_{k}\Phi_l\}_{l\in \{1,2\},k\in \Lambda,m\in \mathbb{Z}}$ is  a matrix-valued  $(\Theta, \Theta^*)$-Gabor frame for $L^2(G,\mathbb{C}^{2\times 2})$ with frame bounds $\gamma_1=\frac{5}{2}$ and $\delta_1=10$.

Consider $\widetilde{\Phi_1}=\begin{bmatrix}
\frac{1}{5}\phi_1 & \phi_1 \\
\phi_2 & \frac{1}{5}\phi_2
\end{bmatrix}$ ,  $\widetilde{\Phi_2}=\begin{bmatrix}
\frac{1}{5}\phi_2& \phi_2 \\
\phi_1 & \frac{1}{5}\phi_1
\end{bmatrix}$ in $L^2(G,\mathbb{C}^{2\times 2})$.
Then, for any $\mathbf{f} = \Big[f_{i j}\Big]_{1 \leq i, j \leq 2} \in L^2(G,\mathbb{C}^{2\times 2})$, we have
\begin{align*}
& \sum_{l\in\{1,2\}} \sum\limits_{k\in \Lambda,m\in\mathbb{Z}} \|\langle \mathbf{f},E_{8m}T_{k}\Phi_l - E_{8m}T_{k}\widetilde{\Phi_l}\rangle\|^2\\
&= \frac{1}{25} \sum_{l\in\{1,2\}}\sum_{k\in \Lambda,m\in\mathbb{Z}}\Big(\big|\int_{G}E_{8m}T_{k}\phi_l\overline{f_{11}}\big|^2+ \big|\int_{G}E_{8m}T_{k}\phi_l\overline{f_{21}}\big|^2 \\
& +   \big|\int_{G}E_{8m}T_{k}\phi_l\overline{f_{12}}\big|^2+\big|\int_{G}E_{8m}T_{k}\phi_l\overline{f_{22}}\big|^2\Big)\\
& = \frac{10}{25} \|\textbf{f}\|^2\\
& \leq \frac{1}{5} \|\Theta^*\mathbf{f} \|^2 + \frac{1}{5} \|\Theta \mathbf{f} \|^2.
\end{align*}
Thus,  all  the conditions in Theorem \ref{pert} are satisfied with $\lambda=0,\mu=\frac{1}{5}, \eta=\frac{1}{5}$. Hence, the collection $\{E_{8m}T_{k} \widetilde{\Phi_l}\}_{l\in \{1,2\},k\in \Lambda,m\in \mathbb{Z}}$ is a matrix-valued  $(\Theta, \Theta^*)$-Gabor frame for $L^2(G,\mathbb{C}^{2\times 2})$.
\end{ex}
Theorem \ref{pert} shows that a matrix-valued  Gabor system $\mathcal{G}(C, B, \widetilde{\Phi}_{\Lambda_0})$ becomes a  $(\Theta, \Theta^*)$-Gabor frame for $L^2(G,\mathbb{C}^{n\times n})$ if its window functions $\widetilde{\Phi}_l, l \in \Lambda_0$ are sufficiently close to the window functions $\Phi_l, l \in \Lambda_0$ of a matrix-valued $(\Theta, \Theta^*)$-Gabor frame $\mathcal{G}(C, B, \Phi_{\Lambda_0})$. This can also be seen as a way of constructing new matrix-valued $(\Theta, \Theta^*)$-Gabor frames by altering the window functions of a known matrix-valued $(\Theta, \Theta^*)$-Gabor frame  appropriately. In the direction of obtaining new matrix-valued $(\Theta, \Theta^*)$-Gabor frames from known matrix-valued $(\Theta, \Theta^*)$-Gabor frames, we give the following result which states that the perturbed  matrix-valued  Gabor systems $(\Theta, \Theta^*)$-Gabor frames,  under suitable conditions,  becomes a matrix-valued $(\Theta, \Theta^*)$-Gabor frame for $L^2(G,\mathbb{C}^{n\times n})$.
\begin{thm}\label{sum}
Let $\mathcal{G}(C, B, \Phi_{\Lambda_0})$  and $\mathcal{G}(C, B, \Psi_{\Lambda_0})$ be matrix-valued  $(\Theta, \Theta^*)$-Gabor frames for $L^2( G, \mathbb{C}^{n\times n})$ with frame bounds $\gamma_1$, $\delta_1$ and $\gamma_2$, $\delta_2$, respectively.
Suppose $\Theta^*$ is  bounded below with constant $m_o$ such that $\sqrt{\frac{\gamma_1}{\delta_2}}> \frac{\|\Theta\|}{m_o}$.
Then, the perturbed matrix-valued Gabor system $\mathcal{G}\Big(C, B, (\Phi_{\Lambda_0} + \Psi_{\Lambda_0})\Big)$ is a $(\Theta, \Theta^*)$-Gabor frame for $L^2(G,\mathbb{C}^{n\times n})$ with frame bounds
\begin{align*}
\left(\sqrt{\gamma_1}-\frac{\sqrt{\delta_2}\|\Theta\|}{m_o}\right)^2 \  \text{and} \ \  2(\delta_1+ \delta_2).
\end{align*}
\end{thm}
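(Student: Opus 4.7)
The plan is to establish the upper and lower frame conditions for $\mathcal{G}(C, B, (\Phi_{\Lambda_0} + \Psi_{\Lambda_0}))$ separately, exploiting the fact that the norm $\|\cdot\|$ on $\ell^2(\Lambda_0 \times \Lambda \times \Lambda', \mathcal{M}_n(\mathbb{C}))$ satisfies the Minkowski (triangle) inequality. Let $\mathbf{f} \in L^2(G,\mathbb{C}^{n \times n})$ be arbitrary. Linearity of $E_{Cm}T_{Bk}$ together with the sesquilinearity of the matrix-valued inner product gives
\begin{align*}
\langle E_{Cm}T_{Bk}(\Phi_l + \Psi_l), \mathbf{f}\rangle = \langle E_{Cm}T_{Bk}\Phi_l, \mathbf{f}\rangle + \langle E_{Cm}T_{Bk}\Psi_l, \mathbf{f}\rangle,
\end{align*}
so the whole argument reduces to standard inequalities applied termwise.

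For the upper bound I will use the elementary inequality $\|A+B\|^2 \leq 2\|A\|^2 + 2\|B\|^2$ inside the sum, and then apply the upper $(\Theta,\Theta^*)$-frame condition of $\mathcal{G}(C,B,\Phi_{\Lambda_0})$ and $\mathcal{G}(C,B,\Psi_{\Lambda_0})$ separately. This yields
\begin{align*}
\sum_{l\in \Lambda_0}\sum_{k\in \Lambda, m\in \Lambda'} \|\langle E_{Cm}T_{Bk}(\Phi_l+\Psi_l), \mathbf{f}\rangle\|^2 \leq 2(\delta_1 + \delta_2)\|\Theta\mathbf{f}\|^2,
\end{align*}
which is exactly the claimed upper frame bound.

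For the lower bound I will instead apply the triangle inequality in $\ell^2$ (Minkowski), which gives
\begin{align*}
\Bigl(\sum_{l,k,m} \|\langle E_{Cm}T_{Bk}(\Phi_l+\Psi_l), \mathbf{f}\rangle\|^2\Bigr)^{1/2} &\geq \Bigl(\sum_{l,k,m}\|\langle E_{Cm}T_{Bk}\Phi_l,\mathbf{f}\rangle\|^2\Bigr)^{1/2} \\
&\quad - \Bigl(\sum_{l,k,m}\|\langle E_{Cm}T_{Bk}\Psi_l,\mathbf{f}\rangle\|^2\Bigr)^{1/2}.
\end{align*}
The first term on the right is bounded below by $\sqrt{\gamma_1}\,\|\Theta^*\mathbf{f}\|$. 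For the second term I use the upper bound of $\mathcal{G}(C,B,\Psi_{\Lambda_0})$ to dominate it by $\sqrt{\delta_2}\,\|\Theta\mathbf{f}\| \leq \sqrt{\delta_2}\,\|\Theta\|\,\|\mathbf{f}\|$, and then convert $\|\mathbf{f}\|$ back to $\|\Theta^*\mathbf{f}\|$ via the bounded-below hypothesis $\|\mathbf{f}\| \leq m_o^{-1}\|\Theta^*\mathbf{f}\|$. Combining these gives
\begin{align*}
\Bigl(\sum_{l,k,m}\|\langle E_{Cm}T_{Bk}(\Phi_l+\Psi_l), \mathbf{f}\rangle\|^2\Bigr)^{1/2} \geq \Bigl(\sqrt{\gamma_1} - \tfrac{\sqrt{\delta_2}\,\|\Theta\|}{m_o}\Bigr)\|\Theta^*\mathbf{f}\|,
\end{align*}
and the hypothesis $\sqrt{\gamma_1/\delta_2} > \|\Theta\|/m_o$ makes the constant strictly positive, so squaring produces the claimed lower frame bound.

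The argument is essentially linear and the only subtle step is the coupling of the two hypotheses on $\Theta^*$: bounded-belowness turns a naive $\|\mathbf{f}\|$ estimate into a $\|\Theta^*\mathbf{f}\|$ estimate of the correct type. The expected obstacle is merely verifying that the Minkowski inequality is applicable in this vector-valued/matrix-valued setting, but this follows from the Hilbert-space structure of $\ell^2(\Lambda_0\times\Lambda\times\Lambda', \mathcal{M}_n(\mathbb{C}))$ described in the preliminaries. No further machinery is required.
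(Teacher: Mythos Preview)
Your proposal is correct and follows essentially the same route as the paper: the upper bound via $\|A+B\|^2\le 2\|A\|^2+2\|B\|^2$, and the lower bound via the Minkowski inequality in $\ell^2$, then converting $\|\Theta\mathbf{f}\|\le\|\Theta\|\,\|\mathbf{f}\|\le \|\Theta\|\,m_o^{-1}\|\Theta^*\mathbf{f}\|$ using the bounded-below hypothesis on $\Theta^*$. The only cosmetic difference is the order in which you treat the two bounds.
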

\proof
For any  $\mathbf{f}\in L^2(G,\mathbb{C}^{n\times n})$, we compute
\begin{align*}
&\Big(\sum\limits_{l\in \Lambda_0}\sum_{k \in \Lambda, m \in \Lambda^ \prime}\|\langle \textbf{f }, E_{Cm}T_{Bk} (\Phi_l+\Psi_l) \rangle \|^2\Big)^\frac{1}{2}\\
&= \Big(\sum\limits_{l\in \Lambda_0}\sum_{k \in \Lambda, m \in \Lambda^ \prime}\|\langle \textbf{f }, E_{Cm}T_{Bk}\Phi_l+ E_{Cm}T_{Bk}\Psi_l \rangle \|^2\Big)^\frac{1}{2}\\
&\geq \Big(\sum\limits_{l\in \Lambda_0}\sum_{k \in \Lambda, m \in \Lambda^ \prime}\|\langle \textbf{f }, E_{Cm}T_{Bk}\Phi_l\rangle\|^2\Big)^\frac{1}{2} \\
&-\Big(\sum\limits_{l\in \Lambda_0}\sum_{k \in \Lambda, m \in \Lambda^ \prime}\|\langle \textbf{f }, E_{Cm}T_{Bk}\Psi_l \rangle \|^2\Big)^\frac{1}{2}\notag\\
&\geq \sqrt{\gamma_1} \|\Theta^*\mathbf{f}\|-\sqrt{\delta_2}\|\Theta\mathbf{f}\| \\
&\geq \sqrt{\gamma_1} \|\Theta^*\mathbf{f}\|-\sqrt{\delta_2}\|\Theta\|\|\mathbf{f}\|\\
&\geq\sqrt{\gamma_1} \|\Theta^*\mathbf{f}\|-\frac{\sqrt{\delta_2}\|\Theta\|}{m_o}\|\Theta^*\mathbf{f}\|.
\end{align*}
This  gives
\begin{align}\label{eq4xa}
&\sum\limits_{l\in \Lambda_0}\sum_{k \in \Lambda, m \in \Lambda^ \prime}\|\langle \textbf{f }, E_{Cm}T_{Bk}\Phi_l+ E_{Cm}T_{Bk}\Psi_l \rangle \|^2 \notag\\
& \geq \Big(\sqrt{\gamma_1}-\frac{\sqrt{\delta_2}\|\Theta\|}{m_o}\Big)^2\|\Theta^*\mathbf{f}\|^2 \ \text{for all} \ \mathbf{f}\in L^2(G,\mathbb{C}^{n\times n}).
\end{align}
Similarly,
\begin{align}\label{eq4xb}
&\sum\limits_{l\in \Lambda_0}\sum_{k \in \Lambda, m \in \Lambda^ \prime}\|\langle \textbf{f }, E_{Cm}T_{Bk} (\Phi_l+ \Psi_l) \rangle \|^2 \notag\\
&\leq 2\Big(\sum\limits_{l\in \Lambda_0}\sum_{k \in \Lambda, m \in \Lambda^ \prime}\|\langle \textbf{f }, E_{Cm}T_{Bk}\Phi_l\rangle\|^2  + \sum\limits_{l\in \Lambda_0}\sum_{k \in \Lambda, m \in \Lambda^ \prime}\|\langle \textbf{f }, E_{Cm}T_{Bk}\Psi_l \rangle \|^2\Big) \notag \\
&\leq 2(\delta_1+ \delta_2)\|\Theta\mathbf{f}\|^2 \ \text{for all} \ \mathbf{f}\in L^2(G,\mathbb{C}^{n\times n}).
\end{align}
From (\ref{eq4xa}) and (\ref{eq4xb}), we conclude that  $\mathcal{G}\Big(C, B, (\Phi_{\Lambda_0} + \Psi_{\Lambda_0})\Big)$  is a $(\Theta, \Theta^*)$-Gabor frame for $L^2(G,\mathbb{C}^{n\times n})$ with the desired frame bounds. This completes the proof.
\endproof
We end this paper by providing an application of Theorem \ref{sum}.
\begin{ex}\label{sumexa}
Consider the $(\Theta, \Theta^*)$-Gabor frame $\{E_{8m}T_{k}\Phi_l\}_{l\in \{1,2\},k\in \Lambda,m\in \mathbb{Z}}$ for $L^2(G,\mathbb{C}^{2\times 2})$ with frame bounds $\gamma_1=\frac{5}{2}$ and $\delta_1=10$ given in Example \ref{pertexa}.

Let $\Psi_1=\begin{bmatrix}
\frac{1}{5}\phi_1 & 0\\
0 & \frac{1}{5}\phi_2
\end{bmatrix}$ ,  $\Psi_2=\begin{bmatrix}
\frac{1}{5}\phi_2& 0\\
0 & \frac{1}{5}\phi_1
\end{bmatrix}$. Then, $\Psi_1$, $\Psi_2 \in L^2(G,\mathbb{C}^{2\times 2})$, and for any $\mathbf{f}\in L^2(G,\mathbb{C}^{2\times 2})$, we have
\begin{align*}
& \sum_{l\in\{1,2\}} \sum\limits_{k\in \Lambda,m\in\mathbb{Z}} \|\langle \mathbf{f},E_{8m}T_{k}\Psi_l\rangle\|^2\\
&= \frac{1}{25} \sum_{l\in\{1,2\}}\sum_{k\in \Lambda,m\in\mathbb{Z}}\Big(\big|\int_{G}E_{8m}T_{k}\phi_l\overline{f_{11}}\big|^2+ \big|\int_{G}E_{8m}T_{k}\phi_l\overline{f_{21}}\big|^2 \\
&+   \big|\int_{G}E_{8m}T_{k}\phi_l\overline{f_{12}}\big|^2+\big|\int_{G}E_{8m}T_{k}\phi_l\overline{f_{22}}\big|^2\Big)\\
& = \frac{10}{25} \|\textbf{f}\|^2\\
& \leq \frac{2}{5} \|\Theta \textbf{f}\|^2.
\end{align*}
Also
\begin{align*}
 \sum_{l\in\{1,2\}} \sum\limits_{k\in \Lambda,m\in\mathbb{Z}} \|\langle \mathbf{f},E_{8m}T_{k}\Psi_l\rangle\|^2
= \frac{2}{5} \| \textbf{f}\|^2
 \geq \frac{1}{10} \|\Theta^* \textbf{f}\|^2,  \  \  \mathbf{f}\in L^2(G,\mathbb{C}^{2\times 2}).
\end{align*}
Thus, $\{E_{8m}T_{k}\Psi_l\}_{l\in \{1,2\},k\in \Lambda,m\in \mathbb{Z}}$ is a $(\Theta, \Theta^*)$-Gabor frame for $L^2(G,\mathbb{C}^{2\times 2})$ with frame bounds $\gamma_2=\frac{1}{10}$ and $\delta_2=\frac{2}{5}$. Further,  $\Theta^*$ is bounded below by $m_o=1$ and $\frac{5}{2}=\sqrt{\frac{\gamma_1}{\delta_2}}> \frac{\|\Theta\|}{m_o}=2$. Hence, by Theorem \ref{sum}, the perturbed matrix-valued Gabor system
$\{E_{8m}T_{k} (\Phi_l+ \Psi_l)\}_{l\in \{1,2\},k\in \Lambda,m\in \mathbb{Z}}$ is a $(\Theta, \Theta^*)$-Gabor frame for $L^2(G,\mathbb{C}^{2\times 2})$ with frame bounds
\begin{align*}
\left(\sqrt{\gamma_1}-\frac{\sqrt{\delta_2}\|\Theta\|}{m_o}\right)^2   \quad  \text{and} \quad 2(\delta_1+ \delta_2)  .
\end{align*}
\end{ex}
\begin{rem}
Theorem \ref{pert} and Theorem \ref{sum} are not only ways of  constructing new frames but also can be used to check if a matrix-valued  Gabor system $\mathcal{G}(C, B, \widetilde{\Phi}_{\Lambda_0})$ is a $(\Theta, \Theta^*)$-Gabor frame for $L^2( G, \mathbb{C}^{n\times n})$, where the window functions $\widetilde{\Phi_l}, l \in {\Lambda_0}$ have complex structure leading to complicated calculations. In order to understand this better, we compare Example \ref{pertexa} and Example \ref{sumexa}. In Example \ref{pertexa}, to prove matrix-valued $(\Theta, \Theta^*)$-Gabor frame conditions of $\{E_{8m}T_{k}\widetilde{\Phi_l}\}_{l\in \{1,2\}, k\in \Lambda, m\in \mathbb{Z}}$, a $(\Theta, \Theta^*)$-Gabor frame $\{E_{8m}T_{k}\Phi_l\}_{l\in \{1,2\}, k\in \Lambda,m\in \mathbb{Z}}$ having simpler window functions is considered. However, in Example \ref{sumexa},  $\Phi_l+\Psi_l=\widetilde{\Phi_l}, l \in \Lambda_0$. Hence,  Example \ref{sumexa} can be seen as a method by which  the collection $\{E_{8m}T_{k}\widetilde{\Phi_l}\}_{l\in \{1,2\}, k\in \Lambda \atop  m\in \mathbb{Z}}$ is proved to be a $(\Theta, \Theta^*)$-Gabor frame by splitting its window functions as a sum of the window functions (perturbed window functions) of two $(\Theta, \Theta^*)$-Gabor frames.
\end{rem}
$$\textbf{\text{Data Related Statement}}$$
No data is used in this study.

$$\textbf{\text{Conflicts of Interest}}$$
The authors have no conflicts of interest.


\begin{thebibliography}{99}\baselineskip15pt
\bibitem{A}
 Aldroubi, A.:  Portraits of frames. Proc. Amer. Math. Soc. \textbf{123}(6), 1661--1668  (1995)

\bibitem{Ald2}
 Aldroubi, A.,   Cabrelli, C.,   Cakmak, A. F.,   Molter, U., Petrosyan, A.: Iterative actions of normal operators. J. Funct. Anal. \textbf{272} (3), 1121--1146 (2017)

 \bibitem{Asamp}
Aldroubi, A., Petrosyan, A.:  Dynamical sampling and systems from iterative actions of operators.  In: Frames and Other Bases in Abstract and Function Spaces, 15--26, Birkh$\ddot{a}$user, Boston (2017)

\bibitem{OC2}
 Christensen,  O.:  An Introduction to Frames and Riesz Bases, 2nd edn. Birkh$\ddot{a}$user, New York (2016)

\bibitem{DVV17}
Deepshikha, Vashisht, L. K.,  Verma, G.:  Generalized weaving frames for  operators in Hilbert spaces. Results Math. \textbf{72} (3), 1369--1391 (2017)

\bibitem{DV17ar}
Deepshikha, Vashisht, L. K.:  Weaving $K$-frames in Hilbert spaces. Results Math.  \textbf{73} (2), Art. 81, 20 pp (2018)

\bibitem{DHou}
Deepshikha, Vashisht, L. K.: On weaving frames.  Houston J.  Math.  \textbf{44} (3),  887--915 (2018)

\bibitem{DS}
 Duffin, R. J.,   Schaeffer, A. C.:  A class of nonharmonic Fourier series. Trans. Amer. Math. Soc. \textbf{72}, 341--366 (1952)

\bibitem{Foll}
Folland, G. B.: A course in abstract harmonic analysis, second edition, CRC Press, New York (2015)

\bibitem{G}
Gabor, D.: Theory of communication, J. Inst. Elect. Eng. \textbf{93},  429--457 (1946)

\bibitem{LG}
 G\v{a}vruta, L.:  Frames for operators. Appl. Compu. Harmon. Anal. \textbf{32},  139--144 (2012)

\bibitem{Groch}
Gr\"{o}chenig, K.:  Foundations of Time-Frequency Analysis.   Birkh\"{a}user,  Boston (2001)

\bibitem{H89}
Heil, C., Walnut, D. F.: Continuous and discrete wavelet transforms. SIAM Rev. \textbf{31} (4), 628--666 (1989)
\bibitem{H11}
Heil, C.:  A Basis Theory Primer.  Birkh\"{a}user, New York (2011)

\bibitem{Jind}
Jindal, D.,  Sinha, U. K., and Verma, G.:  Multivariate Gabor frames for operators in matrix-valued signal spaces over locally compact abelian groups. Int. J. Wavelets Multiresolut. Inf. Process. \textbf{19} (2),  24 pp  (2021)

\bibitem{JVIs}
Jyoti, Vashisht, L. K.: On  WH-packets  of matrix-malued wave packet frames in  $L^2(\mathbb{R}^d, \mathbb{C}^{s\times r})$. Int. J. Wavelets Multiresolut. Inf. Process. \textbf{16} (3), Paper No. 1850022, 22 pp. (2018)

\bibitem{JDV2}
Jyoti, Deepshikha,  Vashisht, L. K., Verma, G.:  Sums of matrix-valued wave packet frames in  $L^2(\mathbb{R}^d, \mathbb{C}^{s\times r})$. Glas. Mat. Ser. III. \textbf{53} (1), 153--177 (2018)


\bibitem{JVash}
Jyoti, Vashisht, L. K.: $\mathcal{K}$-Matrix-valued wave packet frames  in  $L^2(\mathbb{R}^d, \mathbb{C}^{s\times r})$. Math. Phys. Anal. Geom. \textbf{21} (3), Art. No.  21, 19 pp (2018)


\bibitem{JVashII}
Jyoti,  Vashisht, L. K., Verma, G.:  Operators related to the reconstruction property in Banach spaces. Results Math. \textbf{74}(3), Art. 125, 17 pp (2019)

\bibitem{JyVa20}
Jyoti, Vashisht, L. K.:  On matrix-valued wave packet frames  in  $L^2(\mathbb{R}^d, \mathbb{C}^{s\times r})$.  Anal. Math. Phys. \textbf{10}(4), Art 66,  24 pp (2020)

\bibitem{JSchmidt}
Jyoti, Vashisht, L. K.:  On Hilbert-Schmidt frames for operators and Riesz bases.  Zh. Mat. Fiz.  Anal.  Geom., to appear

\bibitem{Kato76}
Kato, T.: Perturbation Theory for Linear Operators.  Springer--Verlag, Berlin--Heidelberg--New York (1976)

\bibitem{Kova07}
Kova\u{c}evi\'{c}, J., Chebira, A.: Life beyond bases: The advent of frames. IEEE Signal Proc. Mag.  \textbf{24} (4), 86--104  (2007)



\bibitem{Hypo}
Martinez-Avendano, R. A.,  Rosenthal, P.: An introduction to operators on the Hardy-Hilbert space. Springer, New York (2007)

\bibitem{Neumann}
von Neumann, J.:  Mathematical Foundations of Quantum Mechanics.  Princeton Univ. Press, Princeton, NJ (1983)


\bibitem{Vlk}
  Vashisht, L. K., Deepshikha: Weaving properties of generalized continuous frames generated  by an iterated
function system. J. Geom. Phys. \textbf{110}, 282--295 (2016)

\bibitem{Vst}
 Vashisht, L. K.,  Malhotra, H. K.:  Discrete vector-valued nonuniform Gabor frames. Bull. Sci. Math. \textbf{178},  Paper  No.  103145,  34 pp. (2022)

\bibitem{XS}
 Xia, X.~G., Suter, B.~W.:  Vector-valued wavelets and vector filter banks. IEEE
Trans.  Signal Processing. \textbf{44} (3), 508--518 (1996)

\bibitem{XChen}
Xiang, Z.-Qi, Chen, Y.-Xian: On $K$-woven frames in Hilbert $C^*$-module. J. Pseudo-Differ. Oper. Appl. \textbf{13}, 44 (2022). https://doi.org/10.1007/s11868-022-00478-3
\bibitem{Y}
 R.~M. Young, R. M.: An introduction to nonharmonic Fourier series, Academic Press, New York (1980)
\bibitem{XZG}
Xiao, X., Zhu, Y., G\v{a}vruta, L.: Some properties of $K$-frames in Hilbert spaces. Results  Math.  \textbf{63} (3--4), 1243--1255 (2013)



\end{thebibliography}
\end{document}